\renewcommand{\leq}{\leqslant}
\newcommand{\Rm}{\mathbb{R}^m}
\newcommand{\Rn}{\mathbb{R}^n}
\newcommand{\winf}[1]{W^{1,\infty}(#1)}
\newcommand{\X}{X}
\newcommand{\rr}{{\mathbb{R}}}
\newcommand{\Om}{\Omega}
\newcommand{\mA}{\mathcal{A}}
\newcommand{\scu}{\longrightarrow}
\newcommand{\lp}[1]{L^{p}(#1)}
\newcommand{\sol}[1]{W^{1,p}_{loc}(#1)}
\newcommand{\anso}[1]{W^{1,p}_X(#1)}
\newcommand{\ansol}[1]{W^{1,p}_{X,loc}(#1)}
\newcommand{\Cm}{\C_{P}}
\newcommand{\lul}{L^1_{loc}(\Omega)}
\newcommand{\C}{\mathcal{C}}
\newcommand{\average}{{\mathchoice {\kern1ex\vcenter{\hrule height.4pt
				width 6pt
				depth0pt} \kern-9.7pt} {\kern1ex\vcenter{\hrule height.4pt width 4.3pt
				depth0pt}
			\kern-7pt} {} {} }}
\definecolor{champagne}{rgb}{0.97, 0.91, 0.81}
\definecolor{asparagus}{rgb}{0.53, 0.66, 0.42}
\DeclareMathOperator{\diver}{div}
\DeclareMathOperator{\im}{Im}
\newtheorem{theorem}{Theorem}[section]
\newtheorem{proposition}[theorem]{Proposition}
\newtheorem{thm}{Theorem}[section]
\theoremstyle{definition}
\newtheorem{example}[theorem]{Example}
\theoremstyle{remark}
\numberwithin{equation}{section}
\title[Variational properties of local functionals driven by arbitrary anisotropies]{Variational properties of local functionals driven by arbitrary anisotropies}
\author[S.~Verzellesi]{Simone Verzellesi}
\address[S.~Verzellesi]{Dipartimento di Matematica, Università di Trento, via Sommarive 14, 38123 Povo (TN), Italy}
\email{simone.verzellesi@unitn.it}
\date{\today}
\subjclass{49J45, 49Q20, 53C17}
\keywords{Integral representation; $\Gamma$-compactness; Local functionals; Anisotropic functionals; Vector fields}
\thanks{
\textit{Acknowledgements}. 
The author thanks Fares Essebei, Alberto Maione, Fabio Paronetto, Andrea Pinamonti and Francesco Serra Cassano for interesting and valuable conversations on the topic of the paper.
The author is member of the Istituto Nazionale di Alta Matematica (INdAM), Gruppo Nazionale per l'Analisi Matematica, la Probabilità e le loro Applicazioni (GNAMPA).
The author has received funding from INdAM under the INdAM--GNAMPA 2023 Project \textit{Equazioni differenziali alle derivate parziali di tipo misto o dipendenti da campi di vettori}, codice CUP\_E53\-C22\-001\-930\-001. 
}
\begin{document}
\maketitle
\begin{abstract}
    We provide integral representation and $\Gamma$-compactness results for anisotropic local functionals depending on arbitrary Lipschitz continuous vector fields. In particular, neither bracket-generating assumptions nor linear independence conditions are required.
\end{abstract}
\section{Introduction}
Since its introduction in the seminal papers \cite{MR0375037,MR0448194}, 
the variational tool of \emph{$\Gamma$-convergence} has proved to be of fundamental importance in the development of modern analysis (cf. \cite{MR1968440,MR1684713,MR1201152}) and in solving problems arising from applications, including phase transitions, elasticity and fracture theory 
(cf. e.g. \cite{MR2390547,MR3489738,MR1854999,MR1916989}). A remarkable instance can be found in \cite{MR0583636,MR0839727,MR0794824,MR0567216}, where the authors studied properties of \emph{integral representation} and $\Gamma$-convergence of \emph{local functionals} defined over Euclidean functional spaces.
By integral representation one means finding conditions under which an arbitrary functional $F(u,A)$, being $u$ a function and $A$ a set, can be expressed in the integral form
\begin{equation}\label{introeucl}
    F(u,A)=\int_Af_e(x,u,Du)\,dx
\end{equation}
where the Euclidean \emph{Lagrangian} $f_e(x,u,\xi)$ typically satisfies some structural properties inherited by $F$. Integral representations as in \eqref{introeucl} are a crucial tool to deal with $\Gamma$-compactness properties, since they allow to show the closure of suitable classes of integral functionals under $\Gamma$-convergence.
Starting from \cite{MR1437714,MR1404326}, many typical problems of the calculus of variations have been transposed into the context of variational functional driven by suitable families of vector fields (cf. e.g. \cite{MR2592503,MR1381780,MR1976353,Capogna2024,MR4021977,MR1448000,MR4230967,MR4536010,MR4504133,MR4054935,MR1865002,PVW}). The key point of this generalization consists in defining a degenerate notion of \emph{$ X$-gradient} $Xu$ 
starting from a family of Lipschitz continuous vector fields $ \X=(X_1,\ldots,X_m)$, with $m\leq n$, defined on an open bounded set $\Om\subseteq\rr^n$.
This construction is rather general, and encompasses the case of \emph{Riemannian manifolds} (cf. \cite{MR1138207}), \emph{Carnot groups} (cf. \cite{MR2363343}), \emph{sub-Riemannian manifolds} (cf. \cite{MR3971262}) and \emph{Carnot-Carathéodory spaces} (cf. \cite{MR1421823}). 
The $X$-gradient $Xu$, which plays the role of the Euclidean gradient $D u$, allows to propose a functional framework suitable for the problems of calculus of variations, with the introduction of the functional spaces $W^{1,p}_X(\Om)$ and $BV_X(\Om)$ (cf. \cite{MR1437714}). 
Recently, the authors of \cite{MR4609808,MR4566142,MR4108409} generalized the results of \cite{MR0583636,MR0839727,MR0794824} to this anisotropic setting. More precisely, they studied integral representation properties of the form
\begin{equation}\label{introin}
    F(u,A)=\int_Af(x,u,Xu)\,dx
\end{equation}
under various assumptions,
together with $\Gamma$-compactness properties for sequences of integral functionals as in \eqref{introin}. We refer to \cite{MR3297726,MR3707084,MR4355973} for similar results in a Cheeger-Sobolev metric setting. All the results in \cite{MR4609808,MR4566142,MR4108409} have been obtained under a structural assumption on $\X$, the so-called \emph{linear independence condition (LIC)} (cf. \cite[Definition 1.1]{MR4108409}). More precisely, $\X$ satisfies the linear independence condition if
\begin{equation}\label{lic}
\tag{LIC}
    \text{$X_1(x),\ldots,X_m(x)$ are linearly independent for a.e. $x\in \Om$.}
\end{equation}
In particular, the approach of \cite{MR4609808,MR4566142,MR4108409} consists in applying the results in \cite{MR0583636,MR0839727,MR0794824} to obtain Euclidean integral representations as in \eqref{introeucl}. That being done, \eqref{lic} plays a crucial role to upgrade the Euclidean representation \eqref{introeucl} to a suitable anisotropic representation as in \eqref{introin}. More precisely, since \eqref{introeucl} gives rise to a representation depending on an Euclidean Lagrangian $f_e(x,u,\xi)$, the authors of \cite{MR4609808,MR4566142,MR4108409} exploits \eqref{lic} to define a new anisotropic Lagrangian $f(x,u,\eta)$ in such a way that
\begin{equation}\label{introeuclanisotr}
    f_e(x,u,Du)=f(x,u,Xu)
\end{equation}
for any sufficiently regular function $u$. Further to \cite{MR4609808,MR4566142,MR4108409}, an interesting open question was whether these results could be generalised beyond the \eqref{lic} setting.\\

In this paper, we provide an affirmative answer to the above issue, showing that all the results in \cite{MR4609808,MR4566142,MR4108409} still hold even without requiring \eqref{lic}. The value of this result is at least twofold. On the one hand, avoiding \eqref{lic} allows to consider anisotropies in the greatest generality. In particular, our results apply to the whole sub-Riemannian framework of Carnot-Carathéodory spaces. Indeed, while \eqref{lic} is general enough to cover many relevant settings, among which Carnot groups and \emph{Grushin spaces} (cf. \cite{MR4108409}), it is easy to provide instances of Carnot-Carathéodory spaces whose associated generating vector fields do not satisfy \eqref{lic} (cf. \Cref{excc}). On the other hand, our generality allows to consider the case in which a fixed family $\X$ is replaced by a sequence of families $(\X^h)_h$ converging to a limiting family $\X$ in any reasonable sense. Even assuming that each family $\X^h$ satisfies \eqref{lic}, not even the strongest convergence (say, for instance, $C^\infty$) can guarantee in general that $\X$ will do the same (cf. \Cref{exseq}). This last consideration takes on concrete relevance in matters of $\Gamma$-compactness, and will be the subject of further research.\\

Our approach starts from noticing that \eqref{introeuclanisotr} is essentially the only point where \eqref{lic} proves fundamental in the approach of \cite{MR4609808,MR4566142,MR4108409}. Therefore, the crucial part of this work is to achieve \eqref{introeuclanisotr} without requiring \eqref{lic}. To explain our approach, we should start by recalling the strategy of \cite{MR4609808,MR4566142,MR4108409}. To this aim, fix a point $x\in\Om$ and assume that $X_1(x),\ldots,X_m(x)$ are linearly independent in $\rr^m$. This implies that the projection map $\C(x):\rr^n\longrightarrow\rr^m$ induced by $X_1(x),\ldots,X_m(x)$ is surjective. In this case, it is possible to set
\begin{equation}\label{introanisofrancesco}
    f(x,u,\eta)=f_e\left(x,u, \C(x)^{-1}(\eta)\right),
\end{equation}
being $ \C(x)^{-1}$ a suitable right-inverse map of $\C(x)$, and to show, under additional assumptions on $f_e$, that \eqref{introanisofrancesco} suffices to infer \eqref{introeuclanisotr}. Since our vector fields may be in general linearly dependent, $\C(x)$ may not be right-invertible. To this aim, we replace $ \C(x)^{-1}$ with the so-called \emph{Moore-Penrose pseudo-inverse} of $\C(x)$ (cf. \cite{MR1417720}), say $\Cm(x)$, and we set
\begin{equation}\label{introanisomia}
    f(x,u,\eta)=f_e\left(x,u, \Cm(x)\cdot\eta\right).
\end{equation}
A careful analysis of the properties of $\Cm(x)$ (cf. \Cref{alglemma}) will allow us to exploit \eqref{introanisomia} to provide anisotropic representations as in \eqref{introeuclanisotr} (cf. \Cref{anisorappr}). Once \eqref{introeuclanisotr} is achieved, we devote the rest of the paper to the generalization of the results in \cite{MR4609808,MR4566142,MR4108409} (cf. \Cref{igegcproofs} and \Cref{furthersection}). We decided to make this last part of the exposition as concise as possible, both to emphasise the crucial importance of \Cref{anisorappr}, and because, once \Cref{anisorappr} is obtained, the proofs work exactly like their counterparts in \cite{MR4609808,MR4566142,MR4108409}. We stress that our results are substantially analogous to those proved in \cite{MR4609808,MR4566142,MR4108409}. Nevertheless, the possible non-surjectivity of $\C(x)$ brings out some interesting new phenomena. First of all, a deep look at the shape of $f$ in \eqref{introanisomia} reveals that it is constant outside the range of $\C(x)$ (cf. \Cref{anisorappr}). More precisely, if we orthogonally decompose any $\eta\in\rr^m$ as
\begin{equation*}
    \eta=\C(x)\cdot\xi_\eta+\eta^\perp
\end{equation*}
for some $\xi_\eta\in\rr^n$, then $f$ satisfies
\begin{equation}\label{introcost}
    f(x,u,\eta)=f(x,u,\C(x)\cdot\xi_\eta).
\end{equation}
Anyway, \eqref{introcost} is verified only if $f$ is defined as in \eqref{introanisomia}. Indeed, it is possible to provide integral representations as in \eqref{introin} by arbitrarily choosing the value of the corresponding anisotropic Lagrangian outside the range of $\C(x)$ (cf. \Cref{exsect4} and \Cref{sufficiente}). Notwithstanding, we prove that \eqref{introcost} is a sufficient property to guarantee  uniqueness in the integral representation (cf. \Cref{2411modiofmythm24}). Another consequence of \eqref{introcost} is that $f$ as in \eqref{introanisomia} cannot inherit from $f_e$ full coercivity in the gradient argument. Nevertheless, one can easily observe how the structural properties of an integral functional depend, in our case, solely on the behaviour of the Lagrangian on the range of $\C(x)$ (cf. \Cref{sufficiente}).\\

The paper is organized as follows. In \Cref{prelisec} we collect some basic preliminaries and we provide some detailed motivations. In \Cref{represect} we prove the core results of this paper, namely \Cref{alglemma} and \Cref{anisorappr}, showing \eqref{introeuclanisotr}. In \Cref{igegcproofs} we show how to exploit \Cref{anisorappr} to provide integral representation (cf. \Cref{2411modiofmythm24}) and $\Gamma$-compactness (cf. \Cref{gammathmwithproof}) for a particular class of local functionals. Moreover, we motivate the content of the statements and we show why our setting of conditions is the optimal one (cf. \Cref{exsect4} and \Cref{sufficiente}). Finally, in \Cref{furthersection} we state, for further references, the remaining counterparts of the results in \cite{MR4609808,MR4566142,MR4108409}.
\section{Preliminaries and motivations}\label{prelisec}
\subsection{Main notation}
In the following, we fix $m,n\in\mathbb{N}$ such that $0<m\leq n$. For $\alpha,\beta\in\mathbb{N}\setminus\{0\}$, we denote by $M(\alpha,\beta)$ the set of matrices with $\alpha$ rows and $\beta$ columns. If $\alpha,\beta$ are as above and $L:\rr^\alpha\longrightarrow\rr^\beta$ is a linear map, we denote by $\ker(L)\subseteq\rr^\alpha$ and $\im(L)\subseteq\rr^\beta$ respectively its kernel and its range. We fix an open bounded set $\Om\subseteq\rr^n$, and we denote by $\mA$ the class of all the open subsets of $\Om$. In the following, we mean vectors in $\rr^\alpha$ as matrices in $M(\alpha,1)$.
\subsection{Lipschitz continuous vector fields} Given a family $\X:=(X_1,\ldots,X_m)$ of Lipschitz continuous vector fields on $\Omega$, we denote by $\C(x)$ the $m\times n$ matrix defined by \[\C(x):=[c_{j,i}(x)]_{\substack{{i=1,\dots,n}\\{j=1,\dots,
m}}}\,,\]
where $c_{j,i}$ is a Lipschitz continuous function on $\Om$ for any $j=1,\ldots,m$ and any $i=1,\ldots,n$ and
$$X_j=\sum_{i=1}^nc_{j,i}\frac{\partial}{\partial x_i}$$
for any $j=1,\ldots,m$. We recall (cf. \cite{MR1437714}) that the  \emph{$X$-gradient} $Xu$ of $u\in\lul$ is the distribution defined by 
\begin{equation*}
    Xu(\varphi)=-\int_\Om u \diver (\varphi(x)\cdot \C(x)){\rm d} x 
\end{equation*}
 for any $\varphi\in C^\infty_c(\Om,\Rm)$.
The associated anisotropic Sobolev spaces are defined by
	$$\anso{\Om}=\{u\in L^p(\Om)\,:\,Xu\in L^p(\Om,\rr^m)\}\quad\text{and}\quad\ansol{\Om}=\bigcap\left\{\anso{A}\,:\,A\in\mA,\,A\Subset\Om\right\}.$$
It is well-known (cf. \cite{FS}) that the vector space $\anso{\Om}$, endowed with the norm
$$\Vert u\Vert_{\anso{\Om}}:=\Vert u\Vert_{L^p(\Om)}+\Vert Xu\Vert_{L^p(\Om,\Rm)},$$
is a Banach space for any $1\leq p<+\infty$, and that it is reflexive when $1<p<+\infty$. Under the Lipschitz continuity assumption on $\X$, $W^{1,p}(\Om)$ embeds continuously into $W^{1,p}_X(\Om)$ (cf. \cite{MR4108409}), and the inclusion may be strict (cf. \cite{MR4609808}). More precisely, if $u\in \sol{\Om}$, its $X$-gradient admits the Euclidean representation
\begin{equation}\label{euclrapprgrad}
    Xu(x)=\C(x)\cdot Du(x)
\end{equation}
for a.e. $x\in\Om$.
\subsection{Relevant vector fields}\label{relevantsection}
As already known, many relevant families of vector fields can already be found when \eqref{lic} holds, such as the Euclidean space, Carnot groups and Grushin spaces (cf. \cite{MR4108409}). Nevertheless, avoiding \eqref{lic} is crucial to ensure that the results of \cite{MR4609808,MR4566142,MR4108409}, gain sufficient generality to be applied, for example, to the Carnot-Carathéodory setting. 
\begin{example}\label{excc}
     As an instance, consider the the family $X=(X_1,X_2)$ of vector fields defined on $\Om=(-1,1)^2\subseteq\rr^2$ by
    \begin{equation*}
        X_1(x)=\frac{\partial}{\partial x_1}\qquad\text{and}\qquad X_2(x)=
\begin{cases}
0&\text{ if }x_1\in(-1,0)\\
x_1\frac{\partial}{\partial x_2}&\text{ if }x_1\in[0,1)
\end{cases}
\,. 
    \end{equation*}
    for any $x=(x_1,x_2)\in\Om$. Is is easy to check that $\Om$, endowed with the control distance induced by $X$ (cf. \cite{MR0793239}), is a Carnot-Carathéodory space. Moreover, $X_1,X_2$ are Lipschitz continuous on $\Om$. Nevertheless, they do not satisfy \eqref{lic}. 
\end{example}
Moreover, as pointed out in the introduction, \eqref{lic} may not in general be preserved under even strong notions of convergence. 
\begin{example}\label{exseq}
    For any $h\in\mathbb N\setminus\{0\}$, consider the the family $\X^h=(X_1,X^h_2)$ of vector fields defined on $\rr^2$ by
    \begin{equation*}
        X_1(x)=\frac{\partial}{\partial x_1}\qquad \text{and}\qquad X^h_2(x)=\frac{1}{h}\frac{\partial}{\partial x_2}
    \end{equation*}
    for any $x=(x_1,x_2)\in\rr^2$. For any $h$ as above, $\X^h$ is made of smooth and globally Lipschitz continuous vector fields which satisfy \eqref{lic} on $\rr^2$. Nevertheless, $(\X^h)_h$ convergence uniformly, with all its derivatives, to $\X=(X_1,0)$, which clearly does not satisfy \eqref{lic}.
\end{example}

\subsection{$\Gamma$-convergence and local functionals}
For a complete account to $\Gamma$-convergence, we refer the reader to \cite{MR1968440,MR1684713,MR1201152}. We just recall that if $(\mathcal X,\tau)$ is a first-countable topological space, a sequence of functionals $(F_h)_h:\mathcal X\longrightarrow [0,+\infty]$ is said to \emph{$\Gamma(\tau)$-converge} to a functional $F:\mathcal X\longrightarrow [0,+\infty]$ if the following two conditions hold.
\begin{itemize}
    \item For any $u\in\mathcal X$ and any sequence $(u_h)_h$ converging to $u$, then
    \begin{equation}\label{liminfineq}
        F(u)\leq\liminf_{h\to+\infty}F_h(u_h).
    \end{equation}
    \item For any $u\in\mathcal X$, there exists a sequence $(u_h)_h$ converging to $u$ such that
    \begin{equation}\label{recodef}
        F(u)=\lim_{h\to+\infty}F_h(u_h).
    \end{equation}
\end{itemize}
Sequences for which \eqref{recodef} holds are known as \emph{recovery sequences}. We conclude this preliminary section with a list of definitions concerning local functions, in order to keep the discussion as self-contained as possible. Since an exhaustive treatment of this topic goes beyond the scope of this paper, we refer to \cite{MR1201152} for further references, and to \cite{MR4609808,MR4566142,MR4108409} for the anisotropic setting notation. If $F:\lp{\Om}\times\mA\scu[0,+\infty]$ (resp. $F:\anso{\Om}\times\mA\scu[0,+\infty]$), we say that $F$ is:
	\begin{itemize}
	\item  a \emph{measure} if $F(u,\cdot)$ is a measure for any $u\in\lp{\Om}$ (resp. $u\in\anso{\Om}$);
	\item  \emph{local} if, for any $A\in\mA$ and $u,v\in\lp{\Om}$ (resp. $u,v\in\anso{\Om}$), then
	$$u|_{A}=v|_{A}\implies F(u,A)=F(v,A);$$
\item  \emph{convex} on $\anso{\Om}$ if $F(\cdot,A)$ restricted to $\anso{\Om}$ is convex for any $A\in\mA$;
	\item  \emph{$L^p$-lower semicontinuous} (resp. \emph{$W_X^{1,p}$-lower semicontinuous}) if $F(\cdot,A)$ is  $L^p$-lower semicontinuous (resp. $W_X^{1,p}$-lower semicontinuous) for any $A\in\mA$;
	\item \emph{weakly*- sequentially lower semicontinuous} if $F(\cdot,A)$ restricted to ${\winf{\Om}}$ is sequentially lower semicontinuous with respect to the weak*- topology of $\winf{\Om}$ for any $A\in\mA$.
	\end{itemize}

\section{Anisotropic representation of Euclidean Lagrangians}\label{represect}
This section constitutes the core of this paper. More precisely, we show how to express a Euclidean Lagrangian in terms of an anisotropic Lagrangian, proving \eqref{introeuclanisotr}. 
\subsection{Algebraic properties of the Moore-Penrose pseudo-inverse}\label{pseudosection}
 Following the notation of \cite{MR4609808,MR4108409}, for any $x\in\Om$ we define the linear map $\C(x):\Rn\scu\Rm$ by
	\begin{equation*}\label{Lx}
		\C(x)(\xi)=\C(x)\cdot\xi
	\end{equation*}
	for any $\xi\in\rr^n$. Moreover, we let
\begin{equation*}\label{N&Vx}
		N_x=\ker(\C(x))\qquad\text{and}\qquad V_x=\left\{\C(x)^T \cdot\eta\ :\ \eta\in\mathbb{R}^m\right\}.
	\end{equation*}
	From standard linear algebra (cf. e.g. \cite{MR0575349}), we know that $\Rn=N_x\oplus V_x$. Hence, for any $x\in\Om$ and $\xi\in\Rn$, there are uniqe $\xi_{N_x}\in N_x$ and $\xi_{V_x}\in V_x$ such that
	\begin{equation}\label{splitting}
		\xi=\xi_{N_x}+\,\xi_{V_x}.
	\end{equation}
Therefore, the map $\Pi_x:\Rn\to V_x$ defined by $\Pi_x(\xi)=\xi_{V_x}$ is well-posed. The authors of \cite{MR4609808,MR4108409} exploited in a crucial way \eqref{lic} to ensure the existence of a right-inverse map associated to $\C(x)$. Precisely, if $X_1(x),\ldots,X_m(x)$ are linearly independent at some $x\in\Om$, then any $\eta\in\rr^m$ can be expressed in the form $\eta=\C(x)\cdot\xi_\eta$ for some $\xi_\eta\in\rr^n$. In the general case, we decompose $\eta\in\rr^m$ as
\begin{equation*}
    \eta=\C(x)\cdot\xi_\eta+\eta^\perp,
\end{equation*}
where $\eta^\perp\in \im(\C(x))^\perp.$ We stress that $\xi_\eta$ is uniquely defined only  modulo $\ker(\C(x))$.
Since $\C(x)$ may not have full rank, our approach must therefore differ from \cite{MR4609808,MR4108409}.
    Let $\Cm:\Om\longrightarrow M(n,m)$ be defined so that $\Cm(x)$ is the Moore-Penrose pseudo-inverse of $\C(x)$ (cf. \cite{MR1417720}) for any $x\in\Om$. Precisely, for a fixed $x\in\Om$, $\Cm(x)$ is the unique matrix in $M(n,m)$ such that (cf. \cite{MR1417720})
    \begin{equation} \label{pseudoinvproperties}
    \begin{aligned}
        &\Cm(x)\cdot\C(x)\cdot\Cm(x)=\Cm(x),\qquad \C(x)\cdot\Cm(x)\cdot\C(x)=\C(x),\\
        &\Cm(x)\cdot\C(x)=\C(x)^T\cdot\Cm(x)^T,\qquad \C(x)\cdot\Cm(x)=\Cm(x)^T\cdot\C(x)^T.
    \end{aligned}
    \end{equation}
Our anisotropic representation is based on the following properties of $\Cm$.
\begin{proposition}\label{alglemma}
    Let $\Cm$ be the above-defined map. Moreover, for any $x\in\Om$, let $\Cm(x):\rr^m\longrightarrow \rr^n$ be the linear map defined by 
    \begin{equation*}
        \Cm(x)(\eta)=\Cm(x)\cdot\eta
    \end{equation*}
    for any $\eta$. Then the map
    \begin{equation*}
        x\mapsto\Cm(x)(\eta)
    \end{equation*}
    is measurable for any $\eta\in\rr^m$. Moreover, for any $x\in\Om$, the following facts hold.
    \begin{itemize}
        \item [(i)] $\im(\Cm(x))= V_x$.
        \item[(ii)] $\Pi_x(\xi)=\Cm(x)\cdot\C(x)\cdot\xi$ for any $\xi\in\rr^n$. 
        \item [(iii)] $\ker(\Cm(x))=\im(\C(x))^\perp$.
    \end{itemize}
\end{proposition}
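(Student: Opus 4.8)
The plan is to reduce every assertion to the single fact that, under the four Penrose identities \eqref{pseudoinvproperties}, the products $\Cm(x)\C(x)$ and $\C(x)\Cm(x)$ are the \emph{orthogonal} projections of $\rr^n$ onto $V_x$ and of $\rr^m$ onto $\im(\C(x))$, respectively. Throughout I fix $x\in\Om$ and abbreviate $C=\C(x)$, $P=\Cm(x)$. The first preliminary observation is that the splitting \eqref{splitting} is in fact orthogonal: since $V_x=\im(C^T)=(\ker C)^\perp=N_x^\perp$, the map $\Pi_x$ is precisely the orthogonal projection onto $V_x$. This is what makes the identification of $PC$ with a projector relevant to (ii).

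For (ii), I would first note that $PC$ is symmetric by the third identity in \eqref{pseudoinvproperties} and idempotent because $(PC)^2=P(CPC)=PC$ by the second identity; hence $PC$ is the orthogonal projection onto its range. To see that this range is exactly $V_x$, the third identity gives $PC=C^TP^T$, so $\im(PC)\subseteq\im(C^T)=V_x$, while $CPC=C$ forces $\rank(PC)\ge\rank(CPC)=\rank(C)=\dim V_x$ (equivalently, a direct computation shows $PC\,C^T\eta=(CPC)^T\eta=C^T\eta$, i.e. $PC$ fixes $V_x$). Thus $\im(PC)=V_x$ and $PC=\Pi_x$, which is (ii). Assertion (i) then follows formally: the first identity $P=PCP$ yields $\im(P)=\im(PCP)\subseteq\im(PC)=V_x$, while the reverse inclusion $V_x=\im(PC)\subseteq\im(P)$ is automatic, whence $\im(P)=V_x$.

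For (iii) I would run the symmetric argument on $CP$: by the fourth and second identities $CP$ is a symmetric idempotent, hence the orthogonal projection onto $\im(CP)=\im(C)$ (again $CPC=C$ pins down the rank), so $\ker(CP)=\im(C)^\perp$. The kernel of $P$ coincides with that of $CP$: if $P\eta=0$ then $CP\eta=0$, and conversely if $CP\eta=0$ then $P\eta=PCP\eta=0$ by the first identity. Therefore $\ker(P)=\ker(CP)=\im(C)^\perp$, which is (iii).

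The remaining, and genuinely delicate, point is measurability, because the pseudo-inverse is discontinuous exactly where $\rank\C(x)$ jumps, so continuity of the entries $c_{j,i}$ cannot be used directly. Here I would invoke the Tikhonov-type limit formula $\Cm(x)=\lim_{\delta\to0^+}\C(x)^T\big(\C(x)\C(x)^T+\delta\,I_m\big)^{-1}$. For each fixed $\delta>0$ the matrix $\C(x)\C(x)^T+\delta I_m$ is positive definite, hence invertible, and since the $c_{j,i}$ are continuous the map $x\mapsto\C(x)^T(\C(x)\C(x)^T+\delta I_m)^{-1}\eta$ is continuous, in particular measurable; taking the pointwise limit along $\delta=1/k$ exhibits $x\mapsto\Cm(x)\eta$ as a pointwise limit of measurable maps, hence measurable. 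I expect this measurability step to be the main obstacle, since it is the only place where the lack of continuity of the pseudo-inverse must be circumvented, whereas (i)--(iii) are routine once $\Cm(x)\C(x)$ and $\C(x)\Cm(x)$ are recognized as orthogonal projectors.
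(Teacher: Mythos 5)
Your proposal is correct and proves every assertion. The measurability step is essentially the paper's own argument: both exhibit $x\mapsto\Cm(x)\cdot\eta$ as a pointwise limit of continuous maps via a Tikhonov regularization formula (the paper uses $\lim_h(\C(x)^T\C(x)+\frac{1}{h}I_n)^{-1}\C(x)^T$, you use the mirror formula $\C(x)^T(\C(x)\C(x)^T+\delta I_m)^{-1}$; both are standard and valid). For the algebraic claims the organization genuinely differs. The paper works identity by identity: for (ii) it computes $\C(x)\cdot\Cm(x)\cdot\C(x)\cdot\xi=\C(x)\cdot\Pi_x(\xi)$ and concludes from the injectivity of $\C(x)$ on $V_x$; for the inclusion $\im(\C(x))^\perp\subseteq\ker(\Cm(x))$ in (iii) it runs a contradiction argument based on the injectivity of $\Cm(x)$ on $\ker(\Cm(x))^\perp$. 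You instead package everything into the single observation that $\Cm(x)\cdot\C(x)$ and $\C(x)\cdot\Cm(x)$ are symmetric idempotents, hence orthogonal projectors onto $V_x$ and $\im(\C(x))$ respectively; (i), (ii) and (iii) then fall out formally, with (iii) obtained from $\ker(\Cm(x))=\ker(\C(x)\cdot\Cm(x))$, which you correctly justify via $\Cm(x)=\Cm(x)\cdot\C(x)\cdot\Cm(x)$. Your route is more conceptual and avoids the paper's contradiction step, at the small extra cost of first checking that the splitting \eqref{splitting} is orthogonal (true, since $V_x=\im(\C(x)^T)=N_x^\perp$, and implicitly used by the paper as well, e.g. when identifying $\ker(\Cm(x))$ with an orthogonal complement). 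Both arguments rest on exactly the same four identities \eqref{pseudoinvproperties} and the same limit formula, so the difference is one of bookkeeping rather than substance; your projector-based phrasing arguably makes properties such as \eqref{costonker} in the sequel more transparent.
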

\begin{proof}
    For a given $\eta\in\rr^n$, it is well-known (cf. \cite{MR1417720}) that 
    \begin{equation*}
        \Cm(x)\cdot\eta=\lim_{h\to+\infty}\left(\C(x)^T\cdot\C(x)+\frac{1}{h}I_n\right)^{-1}\cdot\C(x)^T\cdot\eta.
    \end{equation*}
    for any $x\in\Om$. In particular, being $\C$ continuous over $\Om$, $x\mapsto\Cm(x)\cdot\eta$ is the pointwise limit of continuous functions, and hence it is measurable. Now we fix $x\in\Om$. Notice that, by \eqref{pseudoinvproperties},
    \begin{equation*}
        \Cm(x)\cdot\eta=\Cm(x)\cdot\C(x)\cdot\Cm(x)\cdot\eta=\C(x)^T\cdot\left(\Cm(x)^T\cdot\Cm(x)\cdot\eta\right)
    \end{equation*}
    for any $\eta\in\rr^m$, so that $\im(\Cm(x))\subseteq V_x$. To prove the other inclusion, it suffices to show (ii). To this aim, fix $\xi\in\rr^n$. by \eqref{splitting} and \eqref{pseudoinvproperties},
    \begin{equation*}
\C(x)\cdot\Cm(x)\cdot\C(x)\cdot\xi=\C(x)\cdot\xi=\C(x)\cdot\left(\Pi_x(\xi)+\xi_{N_x}\right)=\C(x)\cdot\Pi_x(\xi).
    \end{equation*}
   Since we already know that $\Cm(x)\cdot\C(x)\cdot\xi\in V_x$, and being $\C(x)$ injective on $V_x$, (ii) follows. To prove (iii), fix $\eta\in\ker(\Cm(x))$ and $\xi\in\rr^n$. Then, by \eqref{pseudoinvproperties},
   \begin{equation*}
       \begin{split}
\eta^T\cdot\C(x)\cdot\xi=\eta^T\cdot\C(x)\cdot\Cm(x)\cdot\C(x)\cdot\xi=\left(\Cm(x)\cdot\eta\right)^T\cdot\C(x)^T\cdot\C(x)\cdot\xi=0,
       \end{split}
   \end{equation*}
   so that $\eta\in\im(\C(x))^\perp$. Hence $\ker(\Cm(x))\subseteq\im(\C(x))^\perp$.  Assume by contradiction that there exists $\eta\neq 0$ such that $\eta\in\im(\C(x))^\perp\cap\ker(\C(x))^\perp$. In view of \eqref{pseudoinvproperties},
   \begin{equation}\label{cpinj}
       \Cm(x)\cdot\eta=\Cm(x)\cdot\C(x)\cdot\Cm(x)\cdot\eta.
   \end{equation}
   Since we know that $\ker(\Cm(x))\subseteq\im(\C(x))^\perp$, then $\im(\C(x))\subseteq\ker(\Cm(x))^\perp$, so that both $\eta$ and $\C(x)\cdot\Cm(x)\cdot\eta$ belongs to $\ker(\Cm(x))^\perp$. Being $\Cm(x)$ injective on $\ker(\Cm(x))^\perp$, we conclude from \eqref{cpinj} that $\eta=\C(x)\cdot\Cm(x)\cdot\eta$, a contradiction with $\eta\in\im(\C(x))^\perp$.
\end{proof}

\subsection{The anisotropic representation result}
We exploit \Cref{alglemma} to show that the anisotropic Lagrangian in \eqref{introanisomia} satisfies \eqref{introeuclanisotr}.
\begin{proposition}\label{anisorappr}
    Let $f_e:\Om\times\rr\times\rr^n\longrightarrow[0,+\infty]$ be a Carathéodory function.  Assume that 
\begin{equation}\label{0702b}
        f_e(x,u,\xi)=f_e(x,u,\Pi_x(\xi))
    \end{equation}
    for a.e. $x\in\Om$, any $u\in\rr$ and any $\xi\in\rr^n$. Define the map $f:\Om\times\rr\times\rr^m\longrightarrow [0,+\infty]$ by
    \begin{equation}\label{fanisotrdef}
        f(x,u,\eta)=f_e(x,u,\Cm(x)\cdot\eta)
    \end{equation}
    for any $x\in\Om$, any $u\in\rr$ and any $\eta\in\rr^m$. Then $f$ is a Carathéodory function such that
    \begin{equation}\label{costonker}
        f(x,u,\eta)=f(x,u,\C(x)\cdot\xi_\eta)
    \end{equation}
    and
    \begin{equation}\label{0702h}
        f_e(x,u,\xi)=f(x,u,\C(x)\cdot\xi)
    \end{equation}
    for a.e. $x\in\Om$, any $u\in\rr$, any $\eta\in\rr^m$ and any $\xi\in\rr^n$. Moreover, $f$ enjoys the following properties.
    \begin{itemize}
        \item [(i)] If there exist $a\in L^1_{loc}(\Om)$ and some $b,c\geq 0$ such that
    \begin{equation}\label{0702g}
     f_e(x,u,\xi)\leq a(x)+b|u|^p+c|\C(x)\cdot\xi|^p
    \end{equation}
    for a.e. $x\in\Om$, any $u\in\rr$ and any $\xi\in\rr^n$, then
    \begin{equation}\label{0702i}
        f(x,u,\C(x)\cdot\xi)\leq a(x)+b|u|^p+ c|\C(x)\cdot\xi|^p
    \end{equation}
     for a.e. $x\in\Om$, any $u\in\rr$ and any $\xi\in\rr^n$.
     \vspace{3pt}
        \item [(ii)] If there exist $d>0$ such that
    \begin{equation}\label{0702g2}
     d|\C(x)\cdot\xi|^p\leq f_e(x,u,\xi)
    \end{equation}
    for a.e. $x\in\Om$, any $u\in\rr$ and any $\xi\in\rr^n$, then
    \begin{equation}\label{0702i2}
        d|\C(x)\cdot\xi|^p\leq f(x,u,\C(x)\cdot\xi)
    \end{equation}
     for a.e. $x\in\Om$, any $u\in\rr$ and any $\eta\in\rr^m$.
     \vspace{3pt}
     \item [(iii)] If  $f_e(x,u,\xi)=f_e(x,\xi)$ for a.e. $x\in\Om$, any $u\in\rr$ and any $\xi\in\rr^n$, then 
    $f(x,u,\eta)=f(x,\eta)$ for a.e. $x\in\Om$, any $u\in\rr$ and any $\eta\in\rr^m$.
     \vspace{3pt}
     \item [(iv)] If 
     \begin{equation}\label{feuclconv}
         f_e(x,u,\cdot)\text{ is convex}
     \end{equation}
     for a.e. $x\in\Om$ and any $u\in\rr$, then
     \begin{equation}\label{0802conv2}
         f(x,u,\cdot)\text{ is convex}
     \end{equation}
     for a.e. $x\in\Om$ and any $u\in\rr$.
     \vspace{3pt}
     \item [(v)] If $$f_e(x,\cdot,\cdot)\text{ is convex}$$ for a.e. $x\in\Om$, then 
     \begin{equation}\label{0802conv1}
         f(x,\cdot,\cdot)\text{ is convex}
     \end{equation}
     for a.e. $x\in\Om$.
    \end{itemize}
\end{proposition}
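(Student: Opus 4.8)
The plan is to treat \Cref{anisorappr} as an essentially formal consequence of the algebraic facts already isolated in \Cref{alglemma}, since all the genuine linear-algebra content lives there. The only analytic ingredient I would add is the standard fact that composing a Carath\'eodory integrand with a measurable selection of its inner variable yields a measurable function. I would organize the argument as: first the core identity \eqref{0702h}, then the constancy relation \eqref{costonker}, then the Carath\'eodory property, and finally the structural items (i)--(v).

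I would begin with \eqref{0702h}, which is the whole point of the construction. Unwinding the definition \eqref{fanisotrdef} gives $f(x,u,\C(x)\cdot\xi)=f_e(x,u,\Cm(x)\cdot\C(x)\cdot\xi)$, and by part (ii) of \Cref{alglemma} the inner vector is exactly $\Pi_x(\xi)$. At this stage the compatibility hypothesis \eqref{0702b} does all the work: it forces $f_e(x,u,\Pi_x(\xi))=f_e(x,u,\xi)$, whence \eqref{0702h}. I regard this as the conceptual heart of the statement. Without \eqref{0702b} the pseudo-inverse only reconstructs the component $\Pi_x(\xi)\in V_x$ and discards the kernel component $\xi_{N_x}$, so the identity would fail; condition \eqref{0702b} is precisely what makes $f_e$ blind to $N_x$, and it is exactly what property (ii) of \Cref{alglemma} is tailored to exploit. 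For \eqref{costonker} I would decompose $\eta=\C(x)\cdot\xi_\eta+\eta^\perp$ with $\eta^\perp\in\im(\C(x))^\perp$ and invoke part (iii) of \Cref{alglemma}, namely $\ker(\Cm(x))=\im(\C(x))^\perp$: since then $\Cm(x)\cdot\eta^\perp=0$, linearity of $\Cm(x)$ gives $\Cm(x)\cdot\eta=\Cm(x)\cdot\C(x)\cdot\xi_\eta$, and applying $f_e(x,u,\cdot)$ yields $f(x,u,\eta)=f(x,u,\C(x)\cdot\xi_\eta)$.

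The Carath\'eodory property I would split into its two halves. Continuity of $(u,\eta)\mapsto f(x,u,\eta)$ for a.e.\ fixed $x$ is immediate, because $\eta\mapsto\Cm(x)\cdot\eta$ is linear, hence continuous, and $f_e(x,\cdot,\cdot)$ is continuous by hypothesis. Measurability of $x\mapsto f(x,u,\eta)$ for fixed $(u,\eta)$ follows by composing the measurable selection $x\mapsto\Cm(x)\cdot\eta$, whose measurability is established in \Cref{alglemma}, with the Carath\'eodory integrand $f_e$. Then the structural items require no new idea: (i) and (ii) are read off directly from \eqref{0702h}, since both bounds are stated along the range, with argument $\C(x)\cdot\xi$, so the two sides agree with the corresponding $f_e$-inequalities; (iii) is immediate from \eqref{fanisotrdef}, as $f$ inherits its $u$-dependence verbatim; and (iv)--(v) use the elementary fact that pre-composition with a linear map preserves convexity, the relevant maps being $\eta\mapsto\Cm(x)\cdot\eta$ for (iv) and $(u,\eta)\mapsto(u,\Cm(x)\cdot\eta)$ for (v).

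The main obstacle is conceptual rather than computational: recognizing that \eqref{0702b} is exactly the compatibility condition under which the pseudo-inverse construction \eqref{fanisotrdef} faithfully recovers $f_e$, so that the substitution $\Cm(x)\cdot\C(x)=\Pi_x$ from \Cref{alglemma} closes the loop in a single line. The only point demanding genuine technical care is the measurability half of the Carath\'eodory claim, where one must use the joint (superpositional) measurability of $f_e$ rather than mere separate continuity; everything else is bookkeeping erected on \Cref{alglemma}.
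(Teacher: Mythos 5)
Your proposal is correct and follows essentially the same route as the paper: the identity \eqref{0702h} via part (ii) of \Cref{alglemma} combined with \eqref{0702b}, the relation \eqref{costonker} via part (iii) of \Cref{alglemma}, the Carath\'eodory property by composing $f_e$ with the measurable map $x\mapsto\Cm(x)\cdot\eta$ and the linear map $(u,\eta)\mapsto(u,\Cm(x)\cdot\eta)$, and items (i)--(v) as immediate consequences. The only difference is cosmetic ordering (the paper establishes the Carath\'eodory property first), so no further comment is needed.
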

\begin{proof}
 Let $f$ be the function in \eqref{fanisotrdef}. First we show that $f$ is a Carathéodory function. To this aim, fix $u\in\rr$ and $\eta\in\rr^m$, and define the function $\Phi_{u,\eta}:\Om\longrightarrow\rr\times\rr^n$ by $$\Phi_{u,\eta}(x)=(u,\Cm(x)\cdot\eta)$$
    for any $x\in\Om$. Being $x\mapsto\Cm(x)\cdot\eta$ measurable by \Cref{alglemma}, then $\Phi_{u,\eta}$ is measurable. Since 
    \begin{equation}\label{fiscara}
f(x,u,\eta)=f_e(x,\Phi_{u,\eta}(x))
    \end{equation}
    for a.e. $x\in\Om$, and being $f_e$ a Carathéodory function, we deduce from \cite[Proposition 3.7]{MR0990890} that $x\mapsto f(x,u,\eta)$ is measurable for any $u\in\rr$ and any $\eta\in\rr^m$. Fix now $x\in\Om$ and define $\Psi_x:\rr\times\rr^m\longrightarrow\rr\times\rr^n$ by
    $$\Psi_x(u,\eta)=\Phi_{u,\eta}(x)$$
    for any $u\in\rr$ and any $\eta\in\rr^m$. Clearly, $\Psi_x$ is a linear function. In particular, by \eqref{fiscara} and being $f_e$ a Carathéodory function, then $(u,\eta)\mapsto f(x,u,\eta)$ is continuous for a.e. $x\in\Om$, so that $f$ is a Carathéodory function. Moreover, in view of \eqref{fiscara}, the linearity of $\Psi_x$ and the definition of $f$, (iii), (iv) and (v) easily follows. Moreover, \eqref{costonker} follows directly from (iii) of \Cref{alglemma}. Let us prove \eqref{0702h}.
    In view of (ii) of \Cref{alglemma}, \eqref{0702b} and the definition of $f$, we infer that
    \begin{equation*}
        \begin{split}
            f(x,u,\C(x)\cdot\xi)=f_e(x,u,\Cm(x)\cdot\C(x)\cdot\xi)
            =f_e(x,u,\Pi_x(\xi))
            =f_e(x,u,\xi)
        \end{split}
    \end{equation*}
    for a.e. $x\in\Om$, any $u\in\rr$ and any $\xi\in\rr^n$, so that \eqref{0702h} follows. Finally, (i) and (ii) are direct consequences of \eqref{0702h}. 
\end{proof}
\section{Integral representation and $\Gamma$-compactness without \eqref{lic}: proofs in a prototypical example}\label{igegcproofs}
In this section we prove integral representation and $\Gamma$-compactness results in the setting of \emph{translation-invariant} local functionals proposed in \cite{MR0794824,MR4108409}. As already pointed out, our proofs will be concise and focused on the application of \Cref{anisorappr}.
\subsection{Integral representation} Let us begin with the generalization of \cite[Theorem 3.12]{MR4108409} to our general setting.
\begin{thm}\label{2411modiofmythm24}
	Let $p\in [1,+\infty)$. Let $F:L^p(\Om)\times\mathcal{A}\longrightarrow [0,+\infty]$ satisfy the following properties.
	\begin{itemize}
		\item [(i)]$F$ is a measure.
		\item [(ii)]$F$ is local.
		\item [(iii)]$F$ is $L^p$-lower semicontinuous.
  \item [(iv)] $F(u+k,A)=F(u,A)$ for any $A\in\mA$, any $u\in C^\infty(A)$ and any $k\in\rr$.
		\item [(v)] There exist $a\in L^1_{loc}(\Om)$ and $c\geq 0$ such that
  \begin{equation*}
      F(u,A)\leq\int_Aa(x)+c|Xu|^p\,dx
  \end{equation*}
  for any $A\in\mA$ and any $u\in C^\infty(A)\cap L^p(\Om)$.
	\end{itemize}
	Then there exists a Carathéodory function $f:\Omega\times\mathbb{R}^m\longrightarrow [0,+\infty)$ 
	such that	
 \begin{equation}\label{primarappr0802}
     F(u,A)=\int_{A}f(x,Xu(x))\,d x
 \end{equation}
 for any $A\in\mA$ and any $u\in W^{1,p}_{X,loc}(A)\cap L^p(\Om)$.
 Moreover, $f$ satisfies \eqref{costonker}, \eqref{0702i} and \eqref{0802conv2}.
In addition, if $\tilde f:\Om\times\rr^m\longrightarrow [0,+\infty)$ is a Carathéodory function which verifies \eqref{costonker}, \eqref{0702i} and for which \eqref{primarappr0802} holds with $\tilde f$ in place of $f$, then
	\begin{equation} \label{2411uniquenessss}
		\tilde f(x,\eta)=f(x,\eta)
	\end{equation}
 for a.e. $x\in\Om$ and any $\eta\in\rr^m$. Finally, if there exists $d>0$ such that  \begin{equation}\label{functionalcoercive}
      d\int_A|Xu|^p\,dx\leq F(u,A)
  \end{equation}
  for any $A\in\mA$ and any $u\in C^\infty(A)\cap L^p(\Om)$, then $f$ satisfies \eqref{0702g2}.
\end{thm}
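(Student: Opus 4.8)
The plan is to reduce the statement about the abstract functional $F$ to the concrete anisotropic representation delivered by \Cref{anisorappr}. First I would invoke the Euclidean integral representation theorem from \cite{MR0794824} (in the form used in \cite[Theorem 3.12]{MR4108409}): since $F$ is a measure, local, $L^p$-lower semicontinuous, translation-invariant in the sense of (iv), and satisfies the upper bound in (v), there exists a Carathéodory Lagrangian $f_e:\Om\times\rr^n\longrightarrow[0,+\infty)$ with
\begin{equation*}
F(u,A)=\int_A f_e(x,Du(x))\,dx
\end{equation*}
for every $A\in\mA$ and every $u\in W^{1,p}_{\loc}(A)\cap L^p(\Om)$. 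The key point, and the place where the absence of \eqref{lic} must be handled, is that $F$ only sees $u$ through its $X$-gradient $Xu=\C(x)\cdot Du$; by locality and the measure property one argues that $f_e$ can be chosen to depend on $Du$ only through $\Pi_x(Du)$, i.e. $f_e(x,\xi)=f_e(x,\Pi_x(\xi))$. Concretely, I would show that modifying $\xi$ by an element of $N_x=\ker(\C(x))$ leaves $Xu$ — and hence $F$ — unchanged, so the representation \eqref{euclrapprgrad} forces \eqref{0702b}.

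Once \eqref{0702b} is in hand, I would define $f(x,\eta):=f_e(x,\Cm(x)\cdot\eta)$ as in \eqref{fanisotrdef} and apply \Cref{anisorappr} directly. That proposition immediately gives that $f$ is Carathéodory, that it satisfies the constancy property \eqref{costonker} off the range of $\C(x)$, and, crucially, the identity $f_e(x,\xi)=f(x,\C(x)\cdot\xi)$ of \eqref{0702h}. Combining \eqref{0702h} with \eqref{euclrapprgrad} yields
\begin{equation*}
f_e(x,Du(x))=f(x,\C(x)\cdot Du(x))=f(x,Xu(x))
\end{equation*}
pointwise a.e., whence \eqref{primarappr0802} follows by integrating over $A$. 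The growth bound \eqref{0702i} and convexity \eqref{0802conv2} transfer from the hypotheses (v) and (the Euclidean representation's) convexity via parts (i) and (iv) of \Cref{anisorappr}; similarly, the coercivity statement \eqref{0702g2} under hypothesis \eqref{functionalcoercive} follows from part (ii) of \Cref{anisorappr}, after first upgrading \eqref{functionalcoercive} to the pointwise lower bound $d|\C(x)\cdot\xi|^p\le f_e(x,\xi)$ on the Euclidean side.

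The genuinely delicate part is the uniqueness assertion \eqref{2411uniquenessss}. Here I would argue as follows: suppose $\tilde f$ is another Carathéodory Lagrangian satisfying \eqref{costonker}, \eqref{0702i} and the representation \eqref{primarappr0802}. Testing against affine functions $u(x)=\eta\cdot\xi_0\cdot x$ of arbitrary slope and using the localization afforded by the measure property, one obtains
\begin{equation*}
\int_A f(x,\C(x)\cdot\xi)\,dx=\int_A \tilde f(x,\C(x)\cdot\xi)\,dx
\end{equation*}
for all $A$ and all fixed $\xi\in\rr^n$, hence $f(x,\C(x)\cdot\xi)=\tilde f(x,\C(x)\cdot\xi)$ for a.e.\ $x$ and every rational $\xi$, and then for every $\xi$ by continuity in the last variable. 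This pins down both Lagrangians on the range $\im(\C(x))$. The subtlety is that the representation alone says nothing about the values outside $\im(\C(x))$ — this is exactly the new phenomenon flagged by \eqref{introcost} — so it is precisely the assumed property \eqref{costonker}, namely $f(x,\eta)=f(x,\C(x)\cdot\xi_\eta)$ and likewise for $\tilde f$, that lets me extend the equality from $\im(\C(x))$ to all of $\rr^m$ by writing an arbitrary $\eta$ as $\C(x)\cdot\xi_\eta+\eta^\perp$ and discarding the orthogonal component. I expect the measurability bookkeeping in the density/continuity step and the careful invocation of \eqref{costonker} to close the gap outside the range to be the main obstacles; the rest is a faithful transcription of the \eqref{lic} proof in \cite{MR4108409} with \Cref{anisorappr} substituted for the right-inverse construction \eqref{introanisofrancesco}.
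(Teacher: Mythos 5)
Your proposal is correct and follows essentially the same route as the paper's proof: the Euclidean representation from the first step of \cite[Theorem 3.12]{MR4108409} yielding $f_e$ with $f_e(x,\xi)=f_e(x,\Pi_x(\xi))$, the passage to the anisotropic Lagrangian via the Moore--Penrose pseudo-inverse and \Cref{anisorappr}, and uniqueness established first on $\im(\C(x))$ and then upgraded to all of $\rr^m$ by \eqref{costonker}. The only step you gloss over is that \eqref{primarappr0802} for general $u\in W^{1,p}_{X,loc}(A)\cap L^p(\Om)$, rather than merely $W^{1,p}_{loc}$, requires the density and continuity argument of the third step of the cited proof, which you acknowledge only implicitly.
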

\begin{proof}
   Arguing \emph{verbatim} as in the first step of the proof of \cite[Theorem 3.12]{MR4108409}, our assumptions allow an Euclidean integral representation for $F$, meaning that 
\begin{equation}\label{euclprimarappr0802}
     F(u,A)=\int_{A}f_e(x,Du(x))\,d x
 \end{equation}
 for any $A\in\mA$ and any $u\in W^{1,p}_{loc}(A)$, where $f_e:\Om\times\rr^n\longrightarrow [0,+\infty)$ is a suitable Carathéodory function satisfying \eqref{0702b}, \eqref{0702g} and \eqref{feuclconv}. Therefore, by \Cref{anisorappr}, $f:\Om\times\rr^m\longrightarrow [0,+\infty)$ defined as in \eqref{fanisotrdef} is a Carathéodory function which satisfies \eqref{costonker}, \eqref{0702h}, \eqref{0702i} and \eqref{0802conv2}. Therefore, combining \eqref{euclrapprgrad}, \eqref{0702h} and \eqref{euclprimarappr0802},
\begin{equation}\label{primaanisotroreppr}
 F(u,A)=\int_{A}f(x,Xu(x))\,d x 
\end{equation}
    for any $A\in\mA$ and any $u\in C^\infty(A)$. In order to achieve \eqref{primarappr0802}, one exploits \eqref{primaanisotroreppr} to argue \emph{verbatim} as in the third step of the proof of \cite[Theorem 3.12]{MR4108409}. If \eqref{functionalcoercive} holds, arguing \emph{verbatim} as in the first step of the proof of \cite[Theorem 3.12]{MR4108409} we infer that $f_e$ satisfies \eqref{0702g2}, so that, by \Cref{anisorappr}, $f$ verifies \eqref{0702i2}. Finally, assume that there exists a Carathéodory function $\tilde f:\Om\times\rr^m\longrightarrow [0,+\infty)$ which verifies \eqref{costonker}, \eqref{0702i} and \eqref{primarappr0802}. By \eqref{0702i}, \eqref{primarappr0802} and proceeding as in the fifth step of the proof of \cite[Theorem 3.3]{MR4609808}, one infer that
	\begin{equation} \label{2411uniquenessssinproof}
		\tilde f(x,\C(x)\cdot\xi)=f(x,\C(x)\cdot\xi)
	\end{equation}
 for a.e. $x\in\Om$ and any $\xi\in\rr^n$. Since both $f$ and $\tilde f$ satisfy \eqref{costonker}, we conclude by \eqref{2411uniquenessssinproof} that
 \begin{equation*} \label{2411uniquenessssinproof}
		\tilde f(x,\eta)=\tilde f(x,\C(x)\cdot\xi_\eta)=f(x,\C(x)\cdot\xi_\eta)=f(x,\eta)
	\end{equation*}
 for a.e. $x\in\Om$ and any $\eta\in\rr^m$, so that \eqref{2411uniquenessss} follows.
\end{proof}
       We point out that the statement of \Cref{2411modiofmythm24} is sharp. On the one hand, neither in \eqref{0702i} nor in \eqref{0802conv2} it is reasonable to expect global bounds rather than partial bounds on $\im (\C(x))$. On the other hand, a uniqueness property as in \eqref{2411uniquenessss} may fails dropping \eqref{costonker}. This is to say, roughly speaking, that the structural properties of $F$ translates into structural properties of $f$ only as regards the part of $f$ acting on the image of $\C$. This fact is not surprising. Indeed, for a fixed $x\in\Om$, we already know that the action of $\C(x)$ is surjective only when $X_1(x),\ldots,X_m(x)$ are linearly independent. Since we are not assuming \eqref{lic}, this property may trivially fail in general.
\begin{example}\label{exsect4}
  As an instance, consider the the family $X=(X_1,X_2)$ of vector fields defined on $\Om=(0,1)^2\subseteq\rr^2$ by
    \begin{equation*}
        X_1(x)=X_2(x)=\frac{\partial}{\partial x_1}
    \end{equation*}
    for any $x=(x_1,x_2)\in\Om$. Clearly $X_1,X_2$ are Lipschitz continuous on $\Om$ and linearly dependent for any $x\in\Om$. The associated matrices $\C$ and $\Cm$ are respectively
    \begin{equation*}
        \C(x)=\left[ \begin{array}{cc}
	1 & 0  \\
	1 & 0  \\ 
\end{array}
\right ]\qquad\text{and}\qquad\Cm(x)=\left[ \begin{array}{cc}
	1/2 & 1/2  \\
	0 & 0  \\ 
\end{array}
\right ]
    \end{equation*}
for any $x\in\Om$. In particular,
\begin{equation}\label{immaginecontroes}
    N_x=\left\{(0,\lambda)\in\rr^2\,:\,\lambda\in\rr\right\}\qquad\text{and}\qquad \im(\C(x))=\left\{(\lambda,\lambda)\in\rr^2\,:\,\lambda\in\rr\right\}
\end{equation}
  for any $x\in\Om$.
    Consider the functions $f_1,f_2:\Om\times\rr^2\longrightarrow [0,+\infty)$ defined by 
    \begin{equation*}
     f_1(x,\eta)=2\left(\frac{\eta_1+\eta_2}{2}\right)^2\qquad \text{and}\qquad f_2(x,\eta)=2\left(\frac{\eta_1+\eta_2}{2}\right)^2+e^{(\eta_1-\eta_2)^2}-1
    \end{equation*}
    for any $x\in\Om$ and any $\eta=(\eta_1,\eta_2)\in\rr^2$. They are clearly Carathéodory functions. In view of \eqref{immaginecontroes}, they both verify \eqref{0702i}, \eqref{0702i2} and \eqref{0802conv2} with $a,b=0$ and $c,d=
   1$. Moreover, 
    \begin{equation}\label{ugualidiverse}
        f_1(x,\C(x)\cdot\xi)=f_2(x,\C(x)\cdot\xi)
    \end{equation}
     for any $x\in\Om$ and any $\xi\in\rr^2$, but they differ otherwise. In particular $f_1$ satisfies \eqref{costonker}, while $f_2$ does not.
    Consider the local functionals $F_1,F_2:L^2(\Om)\times\mA\longrightarrow [0,+\infty]$ defined by
\begin{equation*}\label{708funzionaleduedef24}
F_j(u,A)=
\displaystyle{\begin{cases}
\int_{A} f_j(x,Xu(x))\,d x&\text{ if }A\in\mA,\,u\in W^{1,2}_{X,loc}(A)\\
+\infty&\text{otherwise}
\end{cases}
\,.
}
\end{equation*}
for $j=1,2$. Clearly $F_1$ and $F_2$ verify (i), (ii) and (iv) in \Cref{2411modiofmythm24}.  By means of the forthcoming \Cref{sufficiente}, it holds that 
\begin{equation}\label{ugualifinczionali}
    F_1(u,A)=F_2(u,A)=:F(u,A)
\end{equation}
for any $A\in\mA$ and any $u\in L^2(\Om)$. In particular coupling \eqref{ugualifinczionali} with \cite[Lemma 4.14]{MR4108409}, we conclude that $F$ verifies also (iii) and (v) in \Cref{2411modiofmythm24}, with $a=0$ and $c=1$, so that $F$ verifies all the hypotheses of \Cref{2411modiofmythm24}. In addition, $F$ satisfies \eqref{functionalcoercive} with $d=1$. Nevertheless, on the one hand we know by \eqref{ugualidiverse} that the integral representation of $F$ drastically lack uniqueness. On the other hand, neither $f_1(x,\eta)\geq|\eta|^2$ for a.e. $x\in\Om$ and any $\eta\in\rr^2$, nor $f_2(x,\eta)\leq |\eta|^2$ for a.e. $x\in\Om$ and any $\eta\in\rr^2$.
\end{example}
Despite these differences with respect to the \eqref{lic} framework, we show that the structural properties of $f$ that one can derive from an integral representation as in \Cref{2411modiofmythm24} are essentially the only ones relevant for deducing structural properties of the associated functional.
More precisely, the following holds.
\begin{theorem}\label{sufficiente}
    Let $p\in[1,+\infty)$. Let $f:\Om\times\rr\times\rr^m\longrightarrow[0,+\infty]$ be a Carathéodory function. Let $F:L^p(\Om)\times\mA\longrightarrow[0,+\infty]$ be defined by
\begin{equation*}\label{708funzionaleduedef24}
F(u,A)=
\displaystyle{\begin{cases}
\int_{A} f(x,u(x),Xu(x))\,d x&\text{ if }A\in\mA,\,u\in W^{1,p}_{X,loc}(A)\\
+\infty&\text{otherwise}
\end{cases}
\,.
}
\end{equation*}
The following facts hold. 
    \begin{itemize}
        \item [(i)] If $f$ satisfies \eqref{0702i}, then
  \begin{equation*}
 F(u,A)\leq\int_Aa(x)+b|u(x)|^p+c|Xu|^p\,dx
  \end{equation*}
  for any $A\in\mA$ and any $u\in W^{1,p}_{X,loc}(A)\cap L^p(\Om)$.
  \vspace{3pt}
   \item [(ii)] If $f$ satisfies \eqref{0702i2}, then
  \begin{equation*}
 d\int_A|Xu|^p\,dx\leq F(u,A)
  \end{equation*}
  for any $A\in\mA$ and any $u\in W^{1,p}_{X,loc}(A)\cap L^p(\Om)$.
  \vspace{3pt}
   \item [(iii)] If $\tilde f:\Om\times\rr\times\rr^m\longrightarrow[0,+\infty]$ is another Carathéodory function such that 
   \begin{equation}
       f(x,u,\C(x)\cdot\xi)=\tilde  f(x,u,\C(x)\cdot\xi)
   \end{equation}
for a.e. $x\in\Om$, any $u\in\rr$ and any $\xi\in\rr^n$, then
   \begin{equation*}
       F(u,A)=\int_A \tilde f (x,u(x),Xu(x))\,dx
   \end{equation*}
   for any $A\in\mA$ and any $u\in W^{1,p}_{X,loc}(A)\cap L^p(\Om)$.
    \end{itemize}
\end{theorem}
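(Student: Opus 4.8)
The plan is to reduce all three statements to a single structural fact about the $X$-gradient, namely that
\[
Xu(x)\in\im(\C(x))\qquad\text{for a.e. }x\in A
\]
whenever $u\in\ansol{A}$. Once this is available, the integrand $f(x,u(x),Xu(x))$ only ever evaluates $f$ at points of the form $(x,u(x),\C(x)\cdot\xi)$, which is precisely where the hypotheses \eqref{0702i}, \eqref{0702i2} and the assumption in (iii) provide information. Since all three claims concern only $u\in\ansol{A}\cap\lp{\Om}$, for which $F(u,A)=\int_A f(x,u(x),Xu(x))\,dx$ holds by definition, no ``otherwise'' case needs to be discussed.

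First I would establish the structural fact. For $u\in C^\infty$ it is immediate from \eqref{euclrapprgrad}, since $Xu(x)=\C(x)\cdot Du(x)\in\im(\C(x))$. For a general $u\in\ansol{A}$, I would fix $A'\Subset A$ and invoke the density of smooth functions in $\anso{A'}$ (a Meyers--Serrin type theorem): choose $u_k\in C^\infty(A')$ with $u_k\to u$ in $\anso{A'}$, so that $Xu_k\to Xu$ in $\lp{A'}$. Passing to a subsequence we may assume $Xu_k(x)\to Xu(x)$ for a.e.\ $x\in A'$. Since each fibre $\im(\C(x))$ is a linear, hence closed, subspace of $\rr^m$ and $Xu_k(x)\in\im(\C(x))$ for every $k$, the limit satisfies $Xu(x)\in\im(\C(x))$ for a.e.\ $x\in A'$. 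Exhausting $A$ by such $A'$ yields the claim on all of $A$.

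With the structural fact in hand, the three statements follow pointwise. Fix $u\in\ansol{A}\cap\lp{\Om}$. For a.e.\ $x\in A$ we may write $Xu(x)=\C(x)\cdot\xi$ for some $\xi\in\rr^n$ (depending on $x$, but no measurability of $\xi$ is needed, as the hypotheses hold for \emph{every} $\xi$), and we note that $|\C(x)\cdot\xi|=|Xu(x)|$. Substituting into \eqref{0702i} gives $f(x,u(x),Xu(x))\le a(x)+b|u(x)|^p+c|Xu(x)|^p$ a.e., and integrating over $A$ yields (i). Likewise \eqref{0702i2} gives $d|Xu(x)|^p\le f(x,u(x),Xu(x))$ a.e., and integration yields (ii). For (iii), the hypothesis evaluated at the same $\xi$ gives $f(x,u(x),Xu(x))=\tilde f(x,u(x),Xu(x))$ for a.e.\ $x\in A$, whence $F(u,A)=\int_A f(x,u(x),Xu(x))\,dx=\int_A\tilde f(x,u(x),Xu(x))\,dx$.

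The hard part will be the structural fact $Xu\in\im(\C)$ a.e.\ for functions that are merely $X$-Sobolev. Because the inclusion $\so{\Om}\subseteq\anso{\Om}$ may be strict, one cannot simply read it off from \eqref{euclrapprgrad}; the subtlety is that the fibre $\im(\C(x))$ varies with $x$ and may drop rank, so the argument must pass through smooth approximation and exploit the closedness of each fibre together with a.e.\ convergence along a subsequence. Everything after that is an elementary pointwise manipulation followed by integration.
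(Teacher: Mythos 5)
Your proof is correct, but it takes a genuinely different route from the paper. The paper first verifies all three statements for $u\in C^\infty(A)\cap\lp{\Om}$ via \eqref{euclrapprgrad}, and then extends them to general $u\in\ansol{A}\cap\lp{\Om}$ by combining the density of $C^\infty\cap W^{1,p}_X$ in $W^{1,p}_X$ with a claimed continuity of the involved functionals in the metric topology of $W^{1,p}_X$, together with localization. You instead use the same density theorem once, at the level of the gradient, to prove the single pointwise structural fact that $Xu(x)\in\im(\C(x))$ for a.e.\ $x$ (smooth approximants, $L^p$ convergence of the $X$-gradients, a.e.\ convergence along a subsequence, and closedness of the linear fibres $\im(\C(x))$); after that, (i), (ii) and (iii) are pure pointwise substitutions followed by integration, with no limit passage in the functionals at all. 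This is arguably the more robust argument: the paper's continuity step is unproblematic for (i), where the upper bound \eqref{0702i} provides the domination needed to pass to the limit in $\int_A f(x,u,Xu)\,dx$, but for (ii) and (iii) the integrand $f$ is an arbitrary $[0,+\infty]$-valued Carath\'eodory function with no upper growth bound, so continuity of $u\mapsto\int_A f(x,u(x),Xu(x))\,dx$ along the approximating sequence is not automatic and would itself require justification. Your pointwise reduction sidesteps that issue entirely, at the modest cost of making explicit the closedness-of-fibres argument that the paper leaves implicit.
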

\begin{proof}
    In view of \eqref{euclrapprgrad}, the three statements are clearly true for any $A\in\mA$ and any $u\in C^\infty(A)\cap L^p(\Om)$. Noticing that all the involved functionals are continuous with respect to the metric topology of $W^{1,p}_X$, the general statement follows by means of standard localization and continuity arguments (cf. \cite{MR4108409,MR4609808}) coupled with the density of $C^\infty\cap W^{1,p}_X$ in $W^{1,p}_X$ with respect to the metric topology of $W_X^{1,p}$ (cf. \cite{MR1437714} and \cite[Proposition 2.8]{MR4108409}).
\end{proof}
\subsection{$\Gamma$-compactness} We conclude this section with the generalization of \cite[Theorem 4.10]{MR4108409}.
\begin{theorem}\label{gammathmwithproof}
   Let $p\in(1,+\infty)$. For any $h\in\mathbb N$, let $f_h:\Om\times\rr^m\longrightarrow [0,+\infty]$ be a Carathéodory function satisfying \eqref{0702i}, \eqref{0702i2} and \eqref{0802conv2} with $a\in L^1(\Om)$, $b=0$, $c\geq 0$ and $d>0$ independent of $h\in\mathbb N$. For any $h\in\mathbb N$, define the integral functional $F_h:L^p(\Om)\times\mA\longrightarrow [0,+\infty]$ by 
    \begin{equation*}\label{708funzionaleduedef24}
F_h(u,A)=
\displaystyle{\begin{cases}
\int_{A} f_h(x,Xu(x))\,d x&\text{ if }A\in\mA,\,u\in W^{1,p}_{X}(A)\\
+\infty&\text{otherwise}
\end{cases}
\,.
}
\end{equation*}
Then, up to a subsequence, there exists an integral functional of the form
    \begin{equation*}\label{708funzionaleduedef24}
F(u,A)=
\displaystyle{\begin{cases}
\int_{A} f(x,Xu(x))\,d x&\text{ if }A\in\mA,\,u\in W^{1,p}_{X}(A)\\
+\infty&\text{otherwise}
\end{cases}
\,,
}
\end{equation*}
where $f:\Om\times\rr^m\longrightarrow [0,+\infty)$ is a Carathéodory function which satisfies \eqref{costonker}, \eqref{0702i}, \eqref{0702i2} and \eqref{0802conv2} with $a,b,c,d$ as above, for which 
\begin{equation}\label{gammaconvstatement}
    F(\cdot,A)=\Gamma(L^p)-\lim_{h\to+\infty}F_{h}(\cdot,A)
\end{equation}
for any $A\in\mA$.
\end{theorem}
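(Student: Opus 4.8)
The plan is to reduce the $\Gamma$-compactness statement to its Euclidean counterpart, exactly mirroring the strategy used for the integral representation result in \Cref{2411modiofmythm24}, with \Cref{anisorappr} serving as the bridge between the two settings. The essential observation is that each Carathéodory integrand $f_h$ satisfying \eqref{costonker} is determined by its values on $\im(\C(x))$, and via \eqref{0702h} it lifts to a Euclidean integrand $f_{e,h}:\Om\times\rr^n\longrightarrow[0,+\infty)$ by setting $f_{e,h}(x,\xi)=f_h(x,\C(x)\cdot\xi)$. By \Cref{anisorappr} and the bounds \eqref{0702i}, \eqref{0702i2}, these $f_{e,h}$ satisfy the standard growth and coercivity conditions \eqref{0702g}, \eqref{0702g2} (with $b=0$) together with convexity \eqref{feuclconv}, uniformly in $h$, since the constants $a,b,c,d$ are $h$-independent. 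Hence the associated Euclidean functionals
\begin{equation*}
F_{e,h}(u,A)=\int_A f_{e,h}(x,Du(x))\,dx
\end{equation*}
fall within the class to which the Euclidean $\Gamma$-compactness theorem of \cite{MR0794824} (as used in \cite[Theorem 4.10]{MR4108409}) applies.

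First I would invoke that Euclidean $\Gamma$-compactness result to extract a subsequence along which $F_{e,h}$ $\Gamma(L^p)$-converges to a functional admitting an integral representation $F_e(u,A)=\int_A f_e(x,Du)\,dx$, where $f_e$ is Carathéodory, convex in the gradient variable, and inherits the uniform growth and coercivity bounds \eqref{0702g}, \eqref{0702g2}. The key extra point is that the limiting integrand $f_e$ also satisfies the degeneracy condition \eqref{0702b}, namely $f_e(x,\xi)=f_e(x,\Pi_x(\xi))$: this should follow because each $f_{e,h}$ satisfies \eqref{0702b} by construction (being a pullback through $\C(x)$, it is constant along $N_x=\ker(\C(x))$), and \eqref{0702b} is preserved in the $\Gamma$-limit. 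Concretely, one can argue that for $u$ and $v$ agreeing up to a function with $X$-gradient zero, the recovery sequences and the $\liminf$ inequality force $F_e(u,A)=F_e(v,A)$, which at the level of the Lagrangian yields \eqref{0702b}; alternatively one invokes the uniqueness and localization machinery already present in \cite{MR4108409}. Once \eqref{0702b} is in hand, \Cref{anisorappr} applies to $f_e$ and produces the anisotropic integrand $f(x,\eta)=f_e(x,\Cm(x)\cdot\eta)$, which is Carathéodory and satisfies \eqref{costonker}, \eqref{0702h}, \eqref{0702i}, \eqref{0702i2} and \eqref{0802conv2} with the same constants.

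It then remains to transfer the $\Gamma$-convergence from the Euclidean functionals to the anisotropic ones, i.e.\ to deduce \eqref{gammaconvstatement} from the Euclidean $\Gamma$-convergence of $F_{e,h}$ to $F_e$. Using \eqref{euclrapprgrad}, which gives $Xu(x)=\C(x)\cdot Du(x)$, together with \eqref{0702h} applied to each $f_h$ and to $f$, one has $F_h(u,A)=F_{e,h}(u,A)$ and $F(u,A)=F_e(u,A)$ on $C^\infty(A)\cap L^p(\Om)$; the coercivity \eqref{0702i2} guarantees that finiteness of the functionals confines the relevant competitors to $W^{1,p}_X$, and a density and localization argument in the metric topology of $W^{1,p}_X$ (as in \Cref{sufficiente} and the cited results of \cite{MR1437714,MR4108409}) extends the identification to the whole space. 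Since $\Gamma(L^p)$-convergence is intrinsic to the $L^p$-topology and depends only on the functionals as maps on $L^p(\Om)$, the equalities $F_h=F_{e,h}$ and $F=F_e$ immediately yield \eqref{gammaconvstatement} for each $A\in\mA$. I expect the main obstacle to be precisely the verification that \eqref{0702b} passes to the $\Gamma$-limit, since $\Gamma$-convergence does not in general preserve pointwise structural identities of integrands; the cleanest route is to phrase it as an invariance of the limit functional under addition of functions with vanishing $X$-gradient, which is a property manifestly stable under $\Gamma$-convergence, and then translate it back to the integrand level through the integral representation.
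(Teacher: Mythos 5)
Your high-level plan (pass to the Euclidean level, represent the limit, push the representation down through \Cref{anisorappr}) is related to what the paper does, but the paper's route is more economical and avoids the pitfalls of your auxiliary sequence. The paper applies an abstract $\Gamma$-compactness argument (retracing \cite[Proposition 3.3]{MR4566142}, with (i) and (ii) of \Cref{sufficiente} supplying the uniform bounds) \emph{directly} to the anisotropic functionals $(F_h)_h$, checks that the limit $F$ satisfies all the hypotheses of \Cref{2411modiofmythm24} --- in particular translation invariance, obtained by the recovery-sequence computation $F(u,A)=\lim_h F_h(u_h,A)=\lim_h F_h(u_h+k,A)\ge F(u+k,A)$, a point your proposal omits entirely even though without it one cannot hope for a representation with a $u$-independent integrand --- and then simply invokes \Cref{2411modiofmythm24}; the $+\infty$ branch outside $W^{1,p}_X(A)$ follows from \eqref{functionalcoercive} and the lower semicontinuity of $u\mapsto\int_A|Xu|^p\,dx$. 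Your detour through $F_{e,h}(u,A)=\int_A f_h(x,\C(x)\cdot Du)\,dx$ creates two extra obligations you only gesture at: the identification $\Gamma\text{-}\lim F_{e,h}=\Gamma\text{-}\lim F_h$ is not automatic from agreement on smooth functions, because the effective domains differ ($W^{1,p}$ versus the possibly strictly larger $W^{1,p}_X$) and one needs the Meyers--Serrin density plus a diagonal argument; and the integrands $f_{e,h}$ are \emph{not} Euclidean-coercive (the bound $d|\C(x)\cdot\xi|^p\le f_{e,h}(x,\xi)$ does not control $|\xi|^p$ when $\ker\C(x)\neq\{0\}$), so the coercive closure theorem of \cite{MR0794824} does not apply as stated and you must anyway fall back on abstract compactness plus the non-coercive representation theorem, i.e.\ on what the paper does.

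The genuine gap is your treatment of \eqref{0702b} for the limit integrand. You propose to obtain it as the $\Gamma$-stable invariance of $F_e$ under addition of functions $v$ with $Xv=0$, ``translated back to the integrand level''. That translation fails: $Xv=0$ means $Dv(x)\in N_x$ a.e., and gradients are curl-free, so such $v$ need not exist in any abundance --- when $X$ is bracket-generating (so that $N_x=\ker\C(x)$ is a nontrivial, non-integrable field of subspaces) the only such $v$ are constants, and the invariance you invoke is vacuous while \eqref{0702b} remains a nontrivial pointwise statement. The mechanism that actually yields \eqref{0702b} --- Step 1 of \cite[Theorem 3.12]{MR4108409}, which the paper reuses inside \Cref{2411modiofmythm24} --- is different: the uniform upper bound $F_h(u,A)\le\int_A a+c|Xu|^p\,dx$ survives to the $\Gamma$-limit because the $\Gamma$-limit is dominated by the pointwise $\liminf$; localizing on affine functions it gives $f_e(x,\xi+t\nu)\le a(x)+c|\C(x)\cdot\xi|^p$ for every $t\in\rr$ and $\nu\in N_x$, and a convex function of $t$ bounded above on all of $\rr$ is constant. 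Replace your stability-of-invariance step by this growth-plus-convexity argument (or, better, drop the auxiliary Euclidean sequence and apply \Cref{2411modiofmythm24} to the abstract $\Gamma$-limit as the paper does) and the proof closes.
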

\begin{proof}
    By means of (i) and (ii) in \Cref{sufficiente}, it is simply a matter of retracing the steps of the proof of \cite[Proposition 3.3]{MR4566142} to ensure the existence of a functional $F:L^p(\Om)\times\mA\longrightarrow [0,+\infty]$ which verifies (i), (ii), (iii), (v) and \eqref{functionalcoercive} in \Cref{2411modiofmythm24} and such that \eqref{gammaconvstatement} holds. We claim that $F$ verifies (iv) in \Cref{2411modiofmythm24}. 
    To this aim, fix $A\in\mA$, $u\in C^\infty A$ and $k\in\rr$. We only show that $F(u,A)\geq F(u+k,A)$, being the other inequality analogous. If $F(u,A)=+\infty$, the claim is trivial. Assume otherwise that $F(u,A)$ is finite. Let $(u_h)_h\subseteq L^p(\Om)$ be a recovery sequence for $u$ as in \eqref{recodef}. Since $F(u,A)$ is finite, up to a subsequence $(u_h)_h\subseteq W^{1,p}_X(A)\cap L^p(\Om)$. Therefore, by our choice of $(u_h)_h$, \eqref{liminfineq}, \eqref{recodef} and the definition of $(F_h)_h,$
    \begin{equation*}
F(u,A)=\liminf_{h\to+\infty}F_h(u_h,A)=\liminf_{h\to+\infty}F_h(u_h+k,A)\geq F(u+k,A).
    \end{equation*}
    To conclude, $F$ satisfies the hypotheses of \Cref{2411modiofmythm24}, so that 
    there exists a Carathéodory function $f:\Om\times\rr^m\longrightarrow [0,+\infty)$ which satisfies \eqref{costonker}, \eqref{0702i}, \eqref{0702i2} and \eqref{0802conv2} with $a,b,c,d$ as in the statement, such that \eqref{primarappr0802} holds for any $A\in\mA$ and any $u\in W^{1,p}_X(A)\cap L^p(\Om)$. Finally, fix $A\in\mA$ and let $u\in L^p(\Om)\setminus W^{1,p}_X(A)$. If it was the case that $F(u,A)<+\infty$, then $u$ would admit a recovery sequence $(u_h)_h\subseteq W^{1,p}_X(A)\cap L^p(\Om)$. But then, in view of \eqref{recodef}, (ii) of \Cref{sufficiente} and \cite[Lemma 4.14]{MR4108409},
    \begin{equation*}
    F(u,A)=\liminf_{h\to+\infty}F_h(u_h,A)\geq \liminf_{h\to+\infty}d\int_A|Xu_h|^p\,dx\geq d\int_A|Xu|^p\,dx=+\infty,
    \end{equation*}
    from which a contradiction would follow.
\end{proof}
\section{Further statements}\label{furthersection}
For future references, we include in this last section the statements which generalize the corresponding results in \cite{MR4609808,MR4566142} avoiding \eqref{lic}. We omit their proof since, owing to \Cref{anisorappr} and \Cref{sufficiente}, they can be recovered following the original approach of \cite{MR4609808,MR4566142} as done in the previous section. 
\subsection{Integral representation} The following results generalize, respectively, \cite[Theorem 3.3]{MR4609808}, \cite[Theorem 4.3]{MR4609808} and \cite[Theorem 5.6]{MR4609808}. We refer to \cite{MR4609808,MR4566142} for the notation. 
\begin{theorem}\label{ir2}
	Let $p\in[1,+\infty)$. Let $F:L^p(\Om)\times\mathcal{A}\longrightarrow [0,+\infty]$ satisfy the following properties.
	\begin{itemize}
		\item [(i)]$F$ is a measure.
		\item [(ii)]$F$ is local.
		\item [(iii)]$F$ is convex on $W^{1,p}_X(\Om)$.
		\item [(iv)] There exist $a\in L^1_{loc}(\Om)$ and $b,c\geq 0$ such that
  \begin{equation}\label{boundinstatement}
      F(u,A)\leq\int_Aa(x)+b|u|^p+c|Xu|^p\,dx
  \end{equation}
  for any $A\in\mA$ and any $u\in C^\infty(A)\cap L^p(\Om)$.
	\end{itemize}
	Then there exists a Carathéodory function $f:\Omega\times\rr\times\mathbb{R}^m\longrightarrow [0,+\infty)$ 
	such that	
 \begin{equation}\label{primarappr0802otherstatements}
     F(u,A)=\int_{A}f(x,u(x),Xu(x))\,d x
 \end{equation}
 for any $A\in\mA$ and any $u\in W^{1,p}_{X,loc}(A)\cap L^p(\Om)$.
 Moreover, $f$ satisfies \eqref{costonker}, \eqref{0702i} and \eqref{0802conv1}.
In addition, if $\tilde f:\Om\times\rr^m\longrightarrow [0,+\infty)$ is a Carathéodory function which verifies \eqref{costonker}, \eqref{0702i} and for which \eqref{primarappr0802} holds with $\tilde f$ in place of $f$, then
	\begin{equation} \label{2411uniquenessssother}
		\tilde f(x,u,\eta)=f(x,u,\eta)
	\end{equation}
 for a.e. $x\in\Om$, any $u\in\rr$ and any $\eta\in\rr^m$. Finally, if there exists $d>0$ such that  \eqref{functionalcoercive} holds, then $f$ satisfies \eqref{0702g2}.
\end{theorem}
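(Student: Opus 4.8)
The plan is to follow the template of the proof of \Cref{2411modiofmythm24}, replacing the translation-invariance hypothesis by the convexity assumption (iii) and the Euclidean input by its convex counterpart \cite[Theorem 3.3]{MR4609808}. First I would invoke the Euclidean integral representation: arguing as in the corresponding step of \cite[Theorem 3.3]{MR4609808}, hypotheses (i)--(iv) produce a Carathéodory Lagrangian $f_e:\Om\times\rr\times\rr^n\longrightarrow[0,+\infty)$ with
\begin{equation*}
    F(u,A)=\int_A f_e(x,u(x),Du(x))\,dx
\end{equation*}
for every $A\in\mA$ and every $u\in W^{1,p}_{loc}(A)\cap L^p(\Om)$, where $f_e$ inherits the bound \eqref{0702g} from (iv) and, because $F$ is convex on $W^{1,p}_X(\Om)$, the joint convexity of $f_e(x,\cdot,\cdot)$. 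The decisive feature to extract at this stage is the kernel-invariance \eqref{0702b}: it is exactly what allows $f_e(x,u,Du)$ to be read off from $Xu=\C(x)\cdot Du$, since $\C(x)$ annihilates $N_x$.

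With $f_e$ in hand I would define $f$ as in \eqref{fanisotrdef} and apply \Cref{anisorappr}, which at once yields that $f$ is a Carathéodory function satisfying \eqref{costonker}, \eqref{0702h}, \eqref{0702i} and---by (v) of \Cref{anisorappr}, using the joint convexity of $f_e$---the convexity \eqref{0802conv1}. Combining the Euclidean representation of the $X$-gradient \eqref{euclrapprgrad} with the identity \eqref{0702h} then gives
\begin{equation*}
    F(u,A)=\int_A f_e(x,u(x),Du(x))\,dx=\int_A f(x,u(x),Xu(x))\,dx
\end{equation*}
for every $u\in C^\infty(A)\cap L^p(\Om)$. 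To upgrade this to all $u\in W^{1,p}_{X,loc}(A)\cap L^p(\Om)$ I would reproduce the localization and continuity argument of the relevant step of \cite[Theorem 3.3]{MR4609808}, which rests on the density of $C^\infty\cap W^{1,p}_X$ in $W^{1,p}_X$ for the metric topology, exactly as in the proof of \Cref{sufficiente}.

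For uniqueness, suppose $\tilde f$ also satisfies \eqref{costonker}, \eqref{0702i} and the representation \eqref{primarappr0802otherstatements}. Testing the two representations on suitable functions and proceeding as in the fifth step of \cite[Theorem 3.3]{MR4609808} forces
\begin{equation*}
    \tilde f(x,u,\C(x)\cdot\xi)=f(x,u,\C(x)\cdot\xi)
\end{equation*}
for a.e.\ $x\in\Om$, every $u\in\rr$ and every $\xi\in\rr^n$; this is the most one can hope for, since $F$ probes the Lagrangians only on $\im(\C(x))$. The function of \eqref{costonker} is to propagate this equality off the range: decomposing $\eta=\C(x)\cdot\xi_\eta+\eta^\perp$ and invoking \eqref{costonker} for both $f$ and $\tilde f$ gives $\tilde f(x,u,\eta)=\tilde f(x,u,\C(x)\cdot\xi_\eta)=f(x,u,\C(x)\cdot\xi_\eta)=f(x,u,\eta)$, which is \eqref{2411uniquenessssother}. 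Finally, if \eqref{functionalcoercive} holds, the same Euclidean step produces the lower bound \eqref{0702g2} for $f_e$, and (ii) of \Cref{anisorappr} transfers it into \eqref{0702i2} for $f$.

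The linchpin of the argument, and the only point where the absence of \eqref{lic} is genuinely felt, is twofold. On the one hand, one must ensure that the Euclidean Lagrangian furnished by \cite[Theorem 3.3]{MR4609808} can be chosen to satisfy \eqref{0702b}, since this is precisely the hypothesis that makes \Cref{anisorappr} applicable and that bridges $Du$ with $Xu$. On the other hand, as $\C(x)$ need not be surjective, $F$ constrains the Lagrangian only on $\im(\C(x))$, so \eqref{costonker} must be recognized as the correct surrogate for surjectivity in the uniqueness statement. Once these two points are secured, every linear-algebraic subtlety caused by the possible rank deficiency of $\C(x)$ is already absorbed into \Cref{alglemma} and \Cref{anisorappr}, so that the remaining computations reproduce, essentially verbatim, those of the linearly independent case treated in \cite{MR4609808}.
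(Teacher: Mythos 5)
Your proposal follows exactly the route the paper intends for \Cref{ir2}: obtain the Euclidean representation (with \eqref{0702b}, \eqref{0702g} and joint convexity) from the convex case of \cite[Theorem 3.3]{MR4609808}, pass to the anisotropic Lagrangian via \Cref{anisorappr}, extend by density, and use \eqref{costonker} to upgrade uniqueness from $\im(\C(x))$ to all of $\rr^m$ --- which is precisely how the paper proves the prototype \Cref{2411modiofmythm24} and what it declares sufficient for the omitted proofs in \Cref{furthersection}. The proof is correct and essentially identical in approach to the paper's.
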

\begin{theorem}\label{ir3}
	Let $p\in[1,+\infty)$. Let $F:L^p(\Om)\times\mathcal{A}\longrightarrow [0,+\infty]$ satisfy the following properties.
	\begin{itemize}
		\item [(i)]$F$ is a measure.
		\item [(ii)]$F$ is local.
  \item [(iii)] $F$ satisfies the weak condition $(\omega)$.
		\item [(iv)]$F$ satisfies \eqref{boundinstatement}.
  \item [(v)] $F$ is lower semicontinuous on $W^{1,p}_X(\Om)$.
  \item [(vi)] $F$ is weakly*-sequentially lower semicontinuous.
	\end{itemize}
	Then there exists a Carathéodory function $f:\Omega\times\rr\times\mathbb{R}^m\longrightarrow [0,+\infty)$ 
	such that \eqref{primarappr0802otherstatements} holds.
 Moreover, $f$ satisfies \eqref{costonker}, \eqref{0702i} and \eqref{0802conv2}.
In addition, $f$ is unique in the sense of \eqref{2411uniquenessssother}. Finally, if there exists $d>0$ such that  \eqref{functionalcoercive} holds, then $f$ satisfies \eqref{0702g2}.
\end{theorem}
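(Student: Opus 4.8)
The plan is to follow the same two-stage scheme already used for \Cref{2411modiofmythm24} and \Cref{ir2}: first produce a Euclidean integral representation to which the results of \cite{MR4609808} apply \emph{verbatim}, and then transport it to the anisotropic setting by means of \Cref{anisorappr}. Since the role of \eqref{lic} in \cite{MR4609808} is confined to the passage \eqref{introeuclanisotr}, and this passage is now available for arbitrary Lipschitz families through \Cref{anisorappr}, the whole argument reduces to a careful bookkeeping of the structural properties, with no genuinely new estimate to perform.

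First I would apply the Euclidean integral representation \cite[Theorem 5.6]{MR4609808} to $F$. Hypotheses (i)--(vi) are the precise Euclidean assumptions under which that theorem returns a Carathéodory Lagrangian $f_e:\Om\times\rr\times\rr^n\longrightarrow[0,+\infty)$ with
\begin{equation*}
    F(u,A)=\int_A f_e(x,u(x),Du(x))\,dx
\end{equation*}
for every $A\in\mA$ and every $u\in C^\infty(A)\cap L^p(\Om)$. Here the growth \eqref{boundinstatement}, being expressed through $|Xu|^p=|\C(x)\cdot Du|^p$, yields \eqref{0702g}; the semicontinuity hypotheses (v)--(vi) together with the weak condition $(\omega)$ yield the gradient-convexity \eqref{feuclconv} and a jointly Carathéodory $u$-dependence; and the fact that the bound controls only the $X$-gradient forces the kernel-invariance \eqref{0702b}, namely that $f_e(x,u,\cdot)$ depends on $\xi$ solely through $\Pi_x(\xi)$. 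This last point is the only place where the structure of $\X$ enters, and it is exactly the property that \Cref{anisorappr} is designed to consume.

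Then I would set $f(x,u,\eta)=f_e(x,u,\Cm(x)\cdot\eta)$ as in \eqref{fanisotrdef} and invoke \Cref{anisorappr}, obtaining that $f$ is Carathéodory and satisfies \eqref{costonker}, \eqref{0702h}, \eqref{0702i} and \eqref{0802conv2}. Combining \eqref{euclrapprgrad} with \eqref{0702h} gives
\begin{equation*}
    F(u,A)=\int_A f_e(x,u,Du)\,dx=\int_A f(x,u,\C(x)\cdot Du)\,dx=\int_A f(x,u,Xu)\,dx
\end{equation*}
for smooth $u$, and the representation \eqref{primarappr0802otherstatements} over all of $W^{1,p}_{X,loc}(A)\cap L^p(\Om)$ then follows from the density of $C^\infty\cap W^{1,p}_X$ in $W^{1,p}_X$ and the continuity arguments encapsulated in \Cref{sufficiente}, exactly as in the closing step of \cite[Theorem 5.6]{MR4609808}. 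The coercivity addendum is immediate: if \eqref{functionalcoercive} holds, the cited Euclidean result returns the lower bound \eqref{0702g2} for $f_e$, whence \Cref{anisorappr}(ii) delivers \eqref{0702i2} for $f$.

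For uniqueness, given a competitor $\tilde f$ verifying \eqref{costonker}, \eqref{0702i} and the same representation, I would first reproduce the fifth step of the proof of \cite[Theorem 3.3]{MR4609808} to obtain $\tilde f(x,u,\C(x)\cdot\xi)=f(x,u,\C(x)\cdot\xi)$ for a.e.\ $x\in\Om$ and all $u\in\rr$, $\xi\in\rr^n$, and then use that \emph{both} Lagrangians satisfy \eqref{costonker} to propagate the equality off the range of $\C(x)$ through the decomposition $\eta=\C(x)\cdot\xi_\eta+\eta^\perp$, yielding \eqref{2411uniquenessssother}. The main obstacle is not algebraic but lies in controlling the explicit $u$-dependence: the weak condition $(\omega)$ coupled with the two lower semicontinuity hypotheses is precisely what makes the Euclidean step produce a jointly Carathéodory $f_e$ that is convex in the gradient, and the subtle new point — absent in the \eqref{lic} framework — is that, because $\C(x)$ may fail to be surjective, uniqueness can only be asserted on $\im(\C(x))$ unless one imposes \eqref{costonker} to pin down $f$ on its complement; this is exactly the constancy property that \Cref{anisorappr} guarantees for the canonical choice \eqref{fanisotrdef}.
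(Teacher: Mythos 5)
Your proposal follows exactly the strategy the paper intends for \Cref{ir3} (whose proof it deliberately omits, referring back to the method of the previous section): run the Euclidean first step of the corresponding representation result in \cite{MR4609808} to obtain $f_e$ satisfying \eqref{0702b}, \eqref{0702g} and \eqref{feuclconv}, pass to $f$ via \Cref{anisorappr}, extend from smooth functions by density and \Cref{sufficiente}, and use \eqref{costonker} on both Lagrangians to propagate uniqueness off $\im(\C(x))$. The only slip is bibliographic: \Cref{ir3} generalizes \cite[Theorem 4.3]{MR4609808} (the weak condition $(\omega)$ case), not \cite[Theorem 5.6]{MR4609808}, which corresponds to \Cref{ir4}, and the cited theorem is itself the anisotropic \eqref{lic} statement whose Euclidean first step is what you actually invoke.
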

\begin{theorem}\label{ir4}
	Let $p\in[1,+\infty)$. Let $F:L^p(\Om)\times\mathcal{A}\longrightarrow [0,+\infty]$ satisfy the following properties.
	\begin{itemize}
		\item [(i)]$F$ is a measure.
		\item [(ii)]$F$ is local.
  \item [(iii)] $F$ satisfies the strong condition $(\omega)$.
		\item [(iv)]$F$ satisfies \eqref{boundinstatement}.
  \item [(v)] $F$ is lower semicontinuous on $W^{1,p}_X(\Om)$.
	\end{itemize}
	Then there exists a Carathéodory function $f:\Omega\times\rr\times\mathbb{R}^m\longrightarrow [0,+\infty)$ 
	such that \eqref{primarappr0802otherstatements} holds.
 Moreover, $f$ satisfies \eqref{costonker} and \eqref{0702i}.
In addition, $f$ is unique in the sense of \eqref{2411uniquenessssother}. Finally, if there exists $d>0$ such that  \eqref{functionalcoercive} holds, then $f$ satisfies \eqref{0702g2}.
\end{theorem}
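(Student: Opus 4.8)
The plan is to mirror the proof of \Cref{2411modiofmythm24}, the only differences being that the translation-invariant Euclidean input is replaced by the Euclidean integral representation underlying \cite[Theorem 5.6]{MR4609808} (the one tailored to the strong condition $(\omega)$ together with lower semicontinuity on $\anso{\Om}$), and that no convexity is available to transfer. As in the previous section, the sole genuinely new ingredient is \Cref{anisorappr}, which is invoked exactly where the approach of \cite{MR4609808} invokes \eqref{lic}.

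First I would produce a Euclidean representation. Arguing \emph{verbatim} as in the opening step of the proof of \cite[Theorem 5.6]{MR4609808} --- with hypotheses (i)--(v) transported to the Euclidean side through the continuous embedding $\so{\Om}\hookrightarrow\anso{\Om}$ and the pointwise identity \eqref{euclrapprgrad} --- one obtains a Carath\'eodory function $f_e:\Om\times\rr\times\rr^n\longrightarrow[0,+\infty)$ satisfying \eqref{0702b} and \eqref{0702g} such that
\[
F(u,A)=\int_A f_e(x,u(x),Du(x))\,dx
\]
for any $A\in\mA$ and any $u\in\sol{A}\cap\lp{\Om}$. I would then define $f$ through \eqref{fanisotrdef} and apply \Cref{anisorappr}: the resulting $f$ is Carath\'eodory and satisfies \eqref{costonker}, \eqref{0702h} and \eqref{0702i}. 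Combining \eqref{euclrapprgrad}, \eqref{0702h} and the Euclidean representation yields
\[
F(u,A)=\int_A f(x,u(x),Xu(x))\,dx
\]
for smooth $u$, and the passage to arbitrary $u\in\ansol{A}\cap\lp{\Om}$ follows by the localization and density arguments of \cite{MR4609808} --- equivalently, by (iii) of \Cref{sufficiente} together with the density of $C^\infty\cap\anso{\Om}$ in $\anso{\Om}$ --- establishing \eqref{primarappr0802otherstatements}.

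For uniqueness, suppose $\tilde f$ also represents $F$ and satisfies \eqref{costonker} and \eqref{0702i}. Retracing the fifth step of the proof of \cite[Theorem 3.3]{MR4609808} gives $\tilde f(x,\C(x)\cdot\xi)=f(x,\C(x)\cdot\xi)$ for a.e. $x\in\Om$ and every $\xi\in\rr^n$; since both functions obey \eqref{costonker}, this equality on $\im(\C(x))$ propagates to all of $\rr^m$, yielding \eqref{2411uniquenessssother}. Finally, if \eqref{functionalcoercive} holds, the same opening step forces $f_e$ to satisfy \eqref{0702g2}, whence (ii) of \Cref{anisorappr} transfers the bound to $f$.

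The main obstacle is conceptual rather than computational, and is exactly the phenomenon isolated by \Cref{exsect4}: since $\C(x)$ need not be surjective, the representation constrains $f$ only on $\im(\C(x))$, so uniqueness can hold solely after normalizing the behaviour of $f$ off the range, which is the content of \eqref{costonker}. The role of \Cref{anisorappr} is precisely to furnish a canonical such normalization through the Moore--Penrose pseudo-inverse, and checking that the uniqueness argument of \cite{MR4609808} still closes once \eqref{costonker} is imposed is the only point where care beyond a verbatim transcription is needed.
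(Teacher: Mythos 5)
Your proposal is correct and follows exactly the route the paper intends: the paper omits the proof of \Cref{ir4}, stating that it is recovered by combining the Euclidean representation of \cite[Theorem 5.6]{MR4609808} with \Cref{anisorappr} and \Cref{sufficiente}, precisely as in the proof of \Cref{2411modiofmythm24}. Your handling of the uniqueness step via \eqref{costonker} and of the coercivity transfer via (ii) of \Cref{anisorappr} matches the argument given there, and you correctly note that no convexity claim needs to be transferred in this case.
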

\subsection{$\Gamma$-compactness}  The following results generalize, respectively, \cite[Theorem 3.1]{MR4566142}, \cite[Theorem 4.3]{MR4566142} and \cite[Theorem 4.4]{MR4566142}. We refer again to \cite{MR4566142} for the notation. 
\begin{theorem}
  Let $p\in(1,+\infty)$. For any $h\in\mathbb N$, let $f_h:\Om\times\rr\times\rr^m\longrightarrow [0,+\infty]$ be a Carathéodory function which satisfies \eqref{0702i}, \eqref{0702i2} and \eqref{0802conv1} with $a\in L^1(\Om)$, $b,c\geq 0$ and $d>0$ independent of $h\in\mathbb N$. For any $h\in\mathbb N$, define the integral functional $F_h:L^p(\Om)\times\mA\longrightarrow [0,+\infty]$ by 
    \begin{equation*}\label{fhintegrother}
F_h(u,A)=
\displaystyle{\begin{cases}
\int_{A} f_h(x,u(x),Xu(x))\,d x&\text{ if }A\in\mA,\,u\in W^{1,p}_{X}(A)\\
+\infty&\text{otherwise}
\end{cases}
\,.
}
\end{equation*}
Then, up to a subsequence, there exists an integral functional of the form
    \begin{equation*}\label{fintegrother}
F(u,A)=
\displaystyle{\begin{cases}
\int_{A} f(x,u(x),Xu(x))\,d x&\text{ if }A\in\mA,\,u\in W^{1,p}_{X}(A)\\
+\infty&\text{otherwise}
\end{cases}
\,,
}
\end{equation*}
where $f:\Om\times\rr\times\rr^m\longrightarrow [0,+\infty)$ is a Carathéodory function which satisfies \eqref{costonker}, \eqref{0702i}, \eqref{0702i2} and \eqref{0802conv1} with $a,b,c,d$ as above, for which \eqref{gammaconvstatement} holds.
\end{theorem}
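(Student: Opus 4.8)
The plan is to follow \emph{verbatim} the scheme of the proof of \Cref{gammathmwithproof}, replacing the translation-invariant representation tool \Cref{2411modiofmythm24} by its convex counterpart \Cref{ir2}. First I would convert the pointwise bounds on the integrands into bounds on the functionals. Since each $f_h$ satisfies \eqref{0702i} and \eqref{0702i2}, parts (i) and (ii) of \Cref{sufficiente} give
\begin{equation*}
    d\int_A|Xu|^p\,dx\leq F_h(u,A)\leq\int_Aa(x)+b|u|^p+c|Xu|^p\,dx
\end{equation*}
for every $A\in\mA$ and every $u\in\anso{A}\cap\lp{\Om}$, with $a,b,c,d$ independent of $h$. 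This is exactly the place where the possibly non-surjective setting forces us to rely on \Cref{sufficiente}: the hypotheses \eqref{0702i} and \eqref{0702i2} constrain $f_h$ only on $\im(\C(x))$, but in view of \eqref{euclrapprgrad} only those values enter the integral, so the partial bounds suffice to control $F_h$.

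Next I would run the abstract $\Gamma$-compactness argument. Retracing the steps of the proof of \cite[Proposition 3.3]{MR4566142}, the uniform bounds above together with the joint convexity \eqref{0802conv1} of each $f_h$, which renders each $F_h(\cdot,A)$ convex on $\anso{\Om}$, allow one to extract a subsequence along which $F_h(\cdot,A)$ $\Gamma(L^p)$-converges, simultaneously for all $A\in\mA$, to a functional $F$. The limit $F$ automatically inherits the measure property, locality and $L^p$-lower semicontinuity; being a $\Gamma$-limit of convex functionals it is convex on $\anso{\Om}$; and passing to the limit in the displayed bounds by means of \eqref{recodef} and \eqref{liminfineq} yields that $F$ satisfies the upper bound \eqref{boundinstatement} and the coercivity \eqref{functionalcoercive} with the same constants. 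I expect this extraction, and the verification that every structural property survives in the limit, to be the main obstacle; however it becomes entirely routine once \Cref{sufficiente} has reduced the on-range bounds to genuine functional bounds.

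At this stage $F$ fulfils the hypotheses (i)--(iv) of \Cref{ir2}, which therefore produces a Carathéodory function $f:\Om\times\rr\times\rr^m\longrightarrow[0,+\infty)$ verifying \eqref{costonker}, \eqref{0702i} and \eqref{0802conv1} and such that the representation \eqref{primarappr0802otherstatements} holds for every $u\in\ansol{A}\cap\lp{\Om}$. Since $F$ additionally satisfies \eqref{functionalcoercive}, the last assertion of \Cref{ir2} gives \eqref{0702g2} for $f$, hence \eqref{0702i2} through (ii) of \Cref{anisorappr}. It then remains only to identify $F$ with the integral functional displayed in the statement, that is, to show $F(u,A)=+\infty$ whenever $u\in\lp{\Om}\setminus\anso{A}$. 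Arguing exactly as in the final part of the proof of \Cref{gammathmwithproof}: were $F(u,A)$ finite, $u$ would admit a recovery sequence $(u_h)_h\subseteq\anso{A}\cap\lp{\Om}$, and combining \eqref{recodef}, (ii) of \Cref{sufficiente} and the $L^p$-lower semicontinuity of the $X$-gradient seminorm \cite[Lemma 4.14]{MR4108409} would force $F(u,A)\geq d\int_A|Xu|^p\,dx=+\infty$, a contradiction. This yields \eqref{gammaconvstatement} and completes the proof.
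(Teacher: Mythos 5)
Your proposal is correct and follows exactly the route the paper intends: the paper omits this proof, stating only that it is recovered by combining \Cref{anisorappr} and \Cref{sufficiente} with the scheme of \Cref{gammathmwithproof}, which is precisely what you do, substituting \Cref{ir2} for \Cref{2411modiofmythm24} and replacing the translation-invariance check by the (standard) convexity of the $\Gamma$-limit of convex functionals. Your reading of the coercivity conclusion of \Cref{ir2} as yielding \eqref{0702i2} via (ii) of \Cref{anisorappr} also correctly resolves the paper's own notational slip between \eqref{0702g2} and \eqref{0702i2}.
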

\begin{theorem}
   Let $p\in[1,+\infty)$. For any $h\in\mathbb N$, let $f_h:\Om\times\rr\times\rr^m\longrightarrow [0,+\infty]$ be a Carathéodory function which satisfies \eqref{0702i} and \eqref{0802conv1} with $a\in L^1(\Om)$ and $b,c\geq 0$ independent of $h\in\mathbb N$. For any $h\in\mathbb N$, define the integral functional $F_h:W^{1,p}_X(\Om)\times\mA\longrightarrow [0,+\infty]$ by 
    \begin{equation}\label{fhintegrother}
F_h(u,A)=
\int_{A} f_h(x,u(x),Xu(x))\,d x 
\end{equation}
for any $A\in\mA$ and any $u\in W^{1,p}_{X}(\Om)$. Then, up to a subsequence, there exists an integral functional of the form
    \begin{equation}\label{fintegrother}
F(u,A)=\int_{A} f(x,u(x),Xu(x))\,d x 
\end{equation}
for any $A\in\mA$ and any $u\in W^{1,p}_{X}(\Om)$,
where $f:\Om\times\rr\times\rr^m\longrightarrow [0,+\infty)$ is a Carathéodory function which satisfies \eqref{costonker}, \eqref{0702i} and \eqref{0802conv1} with $a,b,c$ as above, for which 
\begin{equation}\label{gammaconvstatement2}
    F(\cdot,A)=\Gamma(W^{1,p}_X)-\lim_{h\to+\infty}F_{h}(\cdot,A)
\end{equation}
for any $A\in\mA$.
\end{theorem}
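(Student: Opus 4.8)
The plan is to follow the proof of \Cref{gammathmwithproof} almost \emph{verbatim}, substituting the convex integral representation \Cref{ir2} for the translation-invariant one \Cref{2411modiofmythm24}, and to import the localization machinery of \cite[Theorem 4.3]{MR4566142} wholesale; all the content specific to the absence of \eqref{lic} is already encapsulated in \Cref{anisorappr} and reaches this statement only through \Cref{ir2} and \Cref{sufficiente}.

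First I would record a uniform upper bound. Since each $f_h$ satisfies \eqref{0702i} with $a\in L^1(\Om)$ and $b,c\geq0$ independent of $h$, part (i) of \Cref{sufficiente} gives
\[
F_h(u,A)\leq\int_A a(x)+b|u(x)|^p+c|Xu(x)|^p\,dx
\]
for every $A\in\mA$, every $u\in\anso{\Om}$ and every $h\in\nn$. Relying on the compactness of $\Gamma$-convergence in the separable metric space $\anso{\Om}$, together with a diagonal extraction over a countable base of open sets and the De Giorgi--Letta criterion exactly as in \cite[Theorem 4.3]{MR4566142}, I would then pass to a subsequence along which $F_h(\cdot,A)$ $\Gamma(\anso{\Om})$-converges, for every $A\in\mA$, to a functional $F(\cdot,A)$ which is a measure in the set argument. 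The uniform bound above supplies the fundamental estimate needed to glue recovery sequences on overlapping open sets, which is precisely what makes the De Giorgi--Letta criterion applicable and, at the same time, forces $F$ to be local.

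Next I would verify that $F$ meets the hypotheses (i)--(iv) of \Cref{ir2}. Being a measure and local has just been arranged. Convexity of $F(\cdot,A)$ on $\anso{\Om}$ is inherited from the $F_h(\cdot,A)$, which are convex by \eqref{0802conv1}, since a $\Gamma$-limit of convex functionals on a linear metric space is convex. The upper bound \eqref{boundinstatement} follows by using the constant sequence $u_h\equiv u$ as a competitor in the recovery-sequence inequality \eqref{recodef}, which yields $F(u,A)\leq\limsup_{h}F_h(u,A)\leq\int_A a+b|u|^p+c|Xu|^p\,dx$ by the uniform bound of \Cref{sufficiente}(i). Hence \Cref{ir2} applies and produces a Carathéodory function $f:\Om\times\rr\times\rr^m\longrightarrow[0,+\infty)$ satisfying \eqref{costonker}, \eqref{0702i} and \eqref{0802conv1} with the same constants $a,b,c$; since $\anso{\Om}\subseteq W^{1,p}_{X,loc}(A)\cap L^p(\Om)$, the representation \eqref{primarappr0802otherstatements} restricts to give \eqref{fintegrother} on $\anso{\Om}$, and combined with the $\Gamma$-convergence established above this is exactly \eqref{gammaconvstatement2}.

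The only genuinely laborious step is the localization argument certifying that the $\Gamma$-limit is a measure in $A$, i.e. the verification of the De Giorgi--Letta hypotheses through the fundamental estimate. This is entirely classical and wholly independent of \eqref{lic}; the anisotropy-specific difficulties --- the possible non-surjectivity of $\C(x)$, the constancy relation \eqref{costonker}, and the passage from a Euclidean Lagrangian to an $X$-Lagrangian --- have already been resolved in \Cref{anisorappr} and enter here only via \Cref{ir2}.
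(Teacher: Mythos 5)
Your proposal is correct and follows essentially the route the paper itself indicates for this statement (the paper omits the proof, pointing to the original argument of \cite[Theorem 4.3]{MR4566142} combined with \Cref{anisorappr}, \Cref{sufficiente} and the convex representation result \Cref{ir2}, which is exactly your scheme): uniform bounds via \Cref{sufficiente}(i), $\Gamma$-compactness with De Giorgi--Letta localization and the fundamental estimate, verification that the limit is a local convex measure satisfying \eqref{boundinstatement}, and then integral representation. The only slip is a mislabelling: the bound $F(u,A)\leq\limsup_{h}F_h(u,A)$ is obtained by testing the constant sequence in the liminf inequality \eqref{liminfineq}, not in the recovery-sequence condition \eqref{recodef}.
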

\begin{theorem}
   Let $p\in[1,+\infty)$. For any $h\in\mathbb N$, let $f_h:\Om\times\rr\times\rr^m\longrightarrow [0,+\infty]$ be a Carathéodory function which satisfies \eqref{0702i} with $a\in L^1(\Om)$ and $b,c\geq 0$ independent of $h\in\mathbb N$. For any $h\in\mathbb N$, define the integral functional $F_h:W^{1,p}_X(\Om)\times\mA\longrightarrow [0,+\infty]$ as in \eqref{fhintegrother}. Assume that $(F_h)_h$ satisfies a uniform strong condition $(\omega X)$ with respect to $\omega$ (cf. \cite[Definition 4.1]{MR4566142}). Then, up to a subsequence, there exists an integral functional as in \eqref{fintegrother},
where $f:\Om\times\rr\times\rr^m\longrightarrow [0,+\infty)$ is a Carathéodory function which satisfies \eqref{costonker} and \eqref{0702i} with $a,b,c$ as above, for which \eqref{gammaconvstatement2} holds. Finally, $f$ satisfies the strong condition $(\omega X)$ with respect to $\omega$.
\end{theorem}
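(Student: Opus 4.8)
The plan is to mirror the proof of \Cref{gammathmwithproof}, with two adjustments dictated by the weaker hypotheses: since only the upper bound \eqref{0702i} is available (no coercivity and no convexity), the integral representation step must invoke \Cref{ir4} in place of \Cref{2411modiofmythm24}, and the structural information to be propagated to the limit is the strong condition $(\omega X)$ rather than translation invariance.

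First I would record that, by \Cref{sufficiente}(i), the uniform bound \eqref{0702i} forces every $F_h$ to satisfy \eqref{boundinstatement} with the same $a \in L^1(\Om)$ and $b,c \geq 0$. This equi-boundedness is precisely what drives the abstract $\Gamma$-compactness argument: retracing the steps of \cite[Theorem 4.4]{MR4566142}, one extracts a subsequence and produces a functional $F : W^{1,p}_X(\Om) \times \mA \to [0,+\infty]$ realizing the $\Gamma(W^{1,p}_X)$-limit in the sense of \eqref{gammaconvstatement2} and inheriting from $(F_h)_h$ the hypotheses (i), (ii), (iv) and (v) of \Cref{ir4}. Indeed, lower semicontinuity on $W^{1,p}_X(\Om)$ is automatic for $\Gamma$-limits; the bound \eqref{boundinstatement} passes to $F$ by evaluating \eqref{gammaconvstatement2} along a recovery sequence as in \eqref{recodef} and invoking \eqref{boundinstatement} for the $F_h$; and the measure and locality properties follow from the De Giorgi--Letta criterion exactly as in the reference, the required equi-integrability being guaranteed by the uniform bound.

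The crucial step is hypothesis (iii) of \Cref{ir4}, namely that $F$ satisfies the strong condition $(\omega)$, and this is where the uniform strong condition $(\omega X)$ of $(F_h)_h$ is genuinely used. I expect this to be the main obstacle, since one must transfer a modulus-of-continuity estimate through the $\Gamma$-convergence: combining the liminf inequality \eqref{liminfineq} with a recovery sequence \eqref{recodef}, and exploiting that the modulus $\omega$ is the same for all $h$, one checks that $F$ inherits the strong condition $(\omega)$. This requires careful bookkeeping of $\omega$ along the two-sided comparison inherent in the definition \cite[Definition 4.1]{MR4566142}, but is otherwise formally identical to the reference.

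Once $F$ is known to satisfy (i)--(v) of \Cref{ir4}, that theorem produces a Carathéodory function $f : \Om \times \rr \times \rr^m \to [0,+\infty)$ satisfying \eqref{primarappr0802otherstatements}, \eqref{costonker} and \eqref{0702i} with $a,b,c$ as above. It then remains to verify that $f$ itself satisfies the strong condition $(\omega X)$. Since $F$ satisfies the strong condition $(\omega)$ and is represented by $f$ on $W^{1,p}_X$, the functional estimate restricts, via \Cref{sufficiente}(iii) and the Euclidean representation \eqref{euclrapprgrad}, to a pointwise modulus estimate for $f$ on $\im(\C(x))$, which is exactly the strong condition $(\omega X)$ for $f$; on the complement of $\im(\C(x))$ the property \eqref{costonker} renders the estimate vacuous. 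This completes the argument.
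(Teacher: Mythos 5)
Your proposal is correct and follows exactly the route the paper intends: it mirrors the proof of \Cref{gammathmwithproof}, replacing the integral representation step by \Cref{ir4} and propagating the uniform strong condition $(\omega X)$ instead of translation invariance, with \Cref{sufficiente} supplying the equi-bounds and the final pointwise transfer to $f$. The paper itself omits the proof, stating only that it is recovered from \Cref{anisorappr} and \Cref{sufficiente} by following the original approach of the cited references as in the previous section, which is precisely what you have written out.
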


\bibliographystyle{abbrv}
\bibliography{biblio}
\end{document}